%ceci est la version finale de l'article intitule:
%The strong asymptotic freeness of Haar and deterministic matrices
%le 21 janvier 2013

%mots-cles: matrices alŽatoires, probabilitŽs libres, convergence forte
%keywords: random matrices, free probability, strong convergence

%MSC2010: 46L54; 60B20; 15B52

\documentclass[a4paper,10pt]{amsart}
\usepackage[a4paper,margin=3cm]{geometry}
\usepackage[T1]{fontenc}
\usepackage{amsfonts,amsthm,amssymb,amsmath}
\usepackage{amssymb}
\usepackage[pdftex]{color,graphicx}
\numberwithin{equation}{section}
\addtolength{\hoffset}{-0.5cm}
\addtolength{\textwidth}{1cm}
%\sout{Texte ˆ barrer}
%\xout{Texte ˆ hachurer}
%\uwave{Texte ˆ souligner par une vaguelette}

\title{The strong asymptotic freeness of Haar and deterministic matrices}
\author{B. Collins}
\address{
D\'epartement de Math\'ematique et Statistique, Universit\'e d'Ottawa,
585 King Edward, Ottawa, ON, K1N6N5 Canada
and 
CNRS, Institut Camille Jordan Universit\'e  Lyon 1, 43 Bd du 11 Novembre 1918, 69622 Villeurbanne, 
France} 
\email{bcollins@uottawa.ca}
\author{C. Male}
\address{\'Ecole Normale Sup\'erieure de Lyon, Unit\'e de Math\'ematiques Pures et Appliqu\'ees, UMR CNRS 5669, 
46 all\'ee d'Italie, 
69364 LYON Cedex 07, 
FRANCE} 
\email{camille.male@ens-lyon.fr}
\date{}

\newtheorem{Th}{Theorem}[section]

\newtheorem{Def}[Th]{Definition}
\newtheorem{Prop}[Th]{Proposition}

\newtheorem{Lem}[Th]{Lemma}

\newtheorem{Cor}[Th]{Corollary}

\pagestyle{headings}
\renewcommand\leq\leqslant

\renewcommand\geq\geqslant
\def\eps{\varepsilon}
\def\Tr{\mathrm{Tr}}

\def\esp{\mathbb E}

\def\etc{,\ldots ,}

\def\Mk{\mathrm{M}_k(\mathbb C)  }
\def\MN{\mathrm{M}_N(\mathbb C) }

\def\toN{^{(N)}}
\def\toNs{^{(N)*}}

\def\toN{^{(N)}}

\def\limN{\underset{N \rightarrow \infty}\longrightarrow}

\begin{document}
\maketitle
\begin{abstract}
In this paper, we are interested in sequences of 
$q$-tuple of $N\times N$ 
random matrices having a strong limiting distribution
(i.e. given any non-commutative polynomial in the matrices and their conjugate transpose, 
its normalized trace and its norm converge).
We start with such a sequence having this property, and we show that this property pertains if
the 
$q$-tuple 
is enlarged with independent unitary Haar distributed random matrices. Besides, the limit of norms and traces in 
non-commutative polynomials in the enlarged family
can be computed with reduced free product construction.
This extends
results of one author (C. M.) and of Haagerup and Thorbj\o rnsen.
We also show that a $p$-tuple of independent orthogonal and symplectic Haar matrices have a strong limiting distribution, extending a recent result of Schultz.
We mention a couple of applications in random matrix and operator space theory.

\emph{Dans cet article, nous nous int\'eressons au $q$-tuple de matrices $N\times N$ qui ont une distribution
limite forte (i.e. pour tout polyn\^ome non-commutatif en les matrices et leurs adjoints, sa trace
normalis\'ee et sa norme convergent).
Nous partons d'une telle suite de matrices al\'eatoires et montrons que cette propri\'et\'e persiste
si on rajoute au $q$-tuple des matrices ind\'epandantes unitaires distribu\'des suivant la
mesure de Haar. Par ailleurs, la limite des normes et des traces en des polyn\^omes non-commutatifs
en la suite \'elargie peut \^etre calcul\'e avec la construction du produit libre r\'eduit.
Ceci \'etend les r\'esultats d'un des auteurs (C.M.) et de Haagerup et Thorbj\o rnsen.
Nous montrons aussi qu'un $p$-tuple de matrices ind\'epandantes orthogonales et symplectiques 
ont une distribution limite forte, \'etendant par l\`a-m\^eme un r\'esultat de Schultz. 
Nous passons aussi en revue quelques applications de notre r\'esultat aux matrices al\'eatoires
et \`a la th\'eorie des espaces d'op\'orateur.}
\end{abstract}

\section{Introduction and statement of the main results}
\noindent Following random matrix notation, we call GUE the Gaussian Unitary Ensemble, i.e. any sequence $(X_{N})_{N\geq 1}$
of random variables where $X_{N}$ is an $N\times N$ selfadjoint random matrix whose distribution is proportional to
the measure
 $\exp \big( -N/2 \Tr (A^{2}) \big)\mathrm d{A}$, 
where $\mathrm{d}A$ denotes the Lebesgue measure on the set of $N \times N$ Hermitian matrices. We call a unitary Haar matrix of size $N$ any random matrix distributed according to the Haar measure on the compact group of $N$ by $N$ unitary matrices.
\\
\\We recall for readers' convenience the following definitions 
from free probability theory (see \cite{AGZ,NS}).

\begin{Def} ~

\begin{enumerate}
	\item A {\bf$\mathcal C^*$-probability space} $(\mathcal A, .^*, \tau ,  \|\cdot \|)$ consists of a unital $C^*$-algebra $(\mathcal A,.^*, \| \cdot \|)$ endowed with a state $\tau$, i.e. a linear map $\tau \colon\mathcal A\to\mathbb C$
satisfying $\tau[\mathbf 1_\mathcal A]=1$ and $\tau[aa^{*}]\geq 0$ for all $a$ in $\mathcal A$. 
In this paper, we always assume that $\tau$ is a trace, i.e. that it satisfies $\tau[ab]=\tau[ba]$ for every $a,b$ in $\mathcal A$. An element of $\mathcal A$ is called
a (non commutative) random variable. 
A trace is said to be {\bf faithful} if $\tau[aa^*]>0$ whenever $a\neq 0$. If $\tau$ is faithful, then for any $a$ in $\mathcal A$,
		\begin{equation} \label{NormTrace}	
			\| a \| = \lim_{k\rightarrow \infty} \Big( \tau\big[ (a^*a)^k \big] \Big).
		\end{equation}
	\item Let $\mathcal A_1, \ldots ,\mathcal A_k$ be $*$-subalgebras of $\mathcal A$ having the same unit as $\mathcal A$.
They are said to be {\bf free} if for all $a_i\in \mathcal  A_{j_i}$ ($i=1, \ldots, k$, $j_i\in \{1\etc k\}$) 
such that $\tau[a_i]=0$, one has  
		$$\tau[a_1\cdots a_k]=0$$
as soon as $j_1\neq j_2$, $j_2\neq j_3,\ldots ,j_{k-1}\neq j_k$.
Collections of random variables are said to be 
free if the unital subalgebras they generate are free.

	\item Let $\mathbf a= (a_1,\ldots ,a_k)$ be a $k$-tuple of random variables. The {\it joint distribution\it} of the family $\mathbf a$ is the linear form 
$P \mapsto \tau\big[ P(\mathbf a, \mathbf a^*) \big]$ on the set of polynomials in $2k$ non commutative indeterminates. 
By {\bf convergence in distribution}, for a sequence of families of variables $(\mathbf a_N)_{N\geq 1} = (a_{1}^{(N)},\ldots ,a_{k}^{(N)})_{N\geq 1}$ in $\mathcal C^*$-algebras $\big( \mathcal A_N, .^*, \tau_N, \| \cdot \| \big)$,
we mean the pointwise convergence of
the map 
$$P \mapsto \tau_N \big[ P(\mathbf a_N, \mathbf a_N^*) \big],$$
and by {\bf strong convergence in distribution}, we mean convergence in distribution, and pointwise convergence
 of the map
$$P \mapsto \big\| P(\mathbf a_N, \mathbf a_N^*) \big\|.$$

	\item A family of non commutative random variables $\mathbf x=(x_1 \etc x_p)$ is called 
	a {\bf free semicircular system} when the non commutative random variables are free, 
	selfadjoint ($x_i=x_i^*$, $i=1 \etc p$), and for all $k$ in $\mathbb N$ and $i=1 , \ldots , p$, one has
\begin{equation*}
		\tau[ x_i^k] =  \int t^k d\sigma(t),
\end{equation*}
with $d\sigma(t) = \frac 1 {2\pi} \sqrt{4-t^2} \ \mathbf 1_{|t|\leq2} \ dt$ the semicircle distribution.

	\item A non commutative random variable $u$ is called a {\bf Haar unitary} when it is unitary ($uu^*=u^*u=\mathbf 1_{\mathcal A}$) and for all $n$ in $\mathbb N$, one has
\begin{equation*}
		 \tau[ u^n]  =\left \{ \begin{array}{cc}   1 & \mathrm{if} \  n=0,\\
						0 & \mathrm{otherwise}. \end{array}\right.
\end{equation*}
\end{enumerate}
\end{Def}
~\\
In their seminal paper \cite{HT}, Haagerup and Thorbj\o rnsen 
proved the following result. 

\begin{Th}[ \cite{HT} The strong asymptotic freeness of independent GUE matrices]\label{thm:HT} ~\\
For any integer $N\geq 1$, let $X_1\toN \etc X_p\toN$ be $N\times N$ independent GUE matrices and let $(x_1 \etc x_p)$ be a free semicircular system in a $\mathcal C^*$-probability space with faithful state. Then, almost surely, for all polynomials $P$ in $p$ non commutative indeterminates, one has
		$$\big\| P(X_1\toN \etc X_p\toN) \big\| \limN \big\| P(x_1 \etc x_p) \big\|,$$
where $\| \cdot \|$ denotes the operator norm in the left hand side and the norm of the 
$\mathcal C^*$-algebra in the right hand side.
\end{Th}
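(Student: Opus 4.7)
The plan is to follow the now-classical strategy of Haagerup and Thorbj\o rnsen, combining the linearization trick with sharp resolvent estimates obtained via Gaussian integration by parts, and finally promoting convergence in expectation to almost sure convergence via Gaussian concentration.

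First I would observe that convergence in distribution of $(X_1\toN,\ldots,X_p\toN)$ to the free semicircular system $(x_1,\ldots,x_p)$ is Voiculescu's classical theorem; thanks to \eqref{NormTrace}, for the strong statement it is enough to show that for every self-adjoint non-commutative polynomial $P$,
\[
\limsup_{N\to\infty} \bigl\| P(X_1\toN,\ldots,X_p\toN) \bigr\| \leq \bigl\| P(x_1,\ldots,x_p) \bigr\|,
\]
since the reverse inequality already follows from trace convergence. Next I would apply the \emph{linearization trick}: any such $P$ can be encoded, using Schur complements, as the invertibility problem of a self-adjoint affine pencil
\[
L = a_0 \otimes \mathbf 1 + a_1 \otimes X_1\toN + \cdots + a_p \otimes X_p\toN \in \Mk \otimes \MN,
\]
with self-adjoint coefficients $a_i\in\Mk$. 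Thus it suffices to show that for every such pencil the spectrum of $L$ converges, in Hausdorff distance, to that of the deterministic limit $\ell = a_0\otimes\mathbf 1 + \sum a_i \otimes x_i$.

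The core of the argument is a \emph{master inequality} for the matrix-valued resolvent. Writing $G_N(\lambda) = \traceN[(\lambda - L)^{-1}]$ and $G(\lambda) = \trace[(\lambda-\ell)^{-1}]$ for $\lambda$ in the upper half-plane, I would derive, via Gaussian integration by parts (Stein's identity) applied to the GUE matrices, an approximate Schwinger--Dyson equation for $\esp[G_N(\lambda)]$, and then compare it to the exact Schwinger--Dyson equation satisfied by $G(\lambda)$ in the free probabilistic limit. The key estimate to prove is
\[
\bigl\| \esp\bigl[G_N(\lambda)\bigr] - G(\lambda) \bigr\| \leq \frac{C(a_0,\ldots,a_p)}{N^2}\cdot\mathrm{poly}\bigl(|\imag \lambda|^{-1}\bigr),
\]
the crucial point being the exponent $2$ in $1/N^2$, which comes from the absence of a genus-one correction in the pure GUE model.

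From this inequality I would deduce spectral localization by a standard Helffer--Sj\"ostrand or Poisson-integral argument: for any smooth $\varphi$ supported outside $\sigma(\ell)$,
\[
\esp\bigl[(\mathrm{tr}_k\otimes\Tr)\,\varphi(L)\bigr] = O\bigl(N^{-1}\bigr),
\]
so the expected number of eigenvalues of $L$ outside any neighbourhood of $\sigma(\ell)$ is $o(1)$, and in fact summable in $N$. Finally, Gaussian concentration (the Poincar\'e inequality for the GUE measure) applied to the Lipschitz functional $L\mapsto \|\varphi(L)\|_{HS}$ gives exponential concentration around its mean, so Borel--Cantelli converts the summable expectation bound into an almost sure absence of outliers. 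Combined with the lower-bound inclusion $\sigma(\ell)\subseteq \liminf \sigma(L)$ coming from trace convergence, this proves Hausdorff convergence of spectra and hence the theorem.

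The main obstacle is unquestionably the master inequality at order $N^{-2}$: one must carry the Schwinger--Dyson expansion far enough to isolate a genuinely stable fixed-point equation for the deterministic equivalent, control the operator norm of the resulting error (which involves fourth-order cumulant-type corrections and nested resolvents), and make the dependence on $|\imag\lambda|^{-1}$ explicit enough to support the subsequent Helffer--Sj\"ostrand step. All other ingredients (linearization, concentration, Borel--Cantelli) are essentially off-the-shelf once this sharp resolvent comparison is in hand.
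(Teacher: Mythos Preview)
The paper does not prove this theorem at all: Theorem~\ref{thm:HT} is quoted from Haagerup and Thorbj\o rnsen \cite{HT} as a known result, and the paper uses it (through its extension, Theorem~\ref{thm:CM}) only as a black box on which the coupling argument of Section~\ref{sec:proof} is built. There is therefore no proof in the present paper against which to compare your proposal.

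That said, your sketch is a faithful high-level outline of the original argument in \cite{HT}: reduction to the upper bound, linearization to self-adjoint pencils with matrix coefficients, the ``master equation'' for the $\Mk$-valued Stieltjes transform obtained by Gaussian integration by parts, comparison with the free Schwinger--Dyson equation at order $N^{-2}$, and then conversion to almost-sure spectral confinement via a concentration/Borel--Cantelli step. Two minor deviations from \cite{HT}: the original paper does not use a Helffer--Sj\"ostrand formula but rather an ad hoc analytic continuation from the upper half-plane together with a counting argument for eigenvalues outside the limiting spectrum; and the passage from expectation to almost sure is done there through a direct variance estimate on $\tau_N[(\lambda-L)^{-1}]$ (again via integration by parts) rather than a general Poincar\'e/Lipschitz concentration inequality. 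Your variants would work, but if you intend to cite \cite{HT} you should be aware that the details differ in those two places.
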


\noindent This theorem is a very deep result in random matrix theory, and had an important impact.
Firstly, it had significant applications to $C^{*}$-algebra theory \cite{HT,PIS}, and more recently to 
quantum information theory \cite{BCN,CN}. Secondly, it was generalized in many directions. 
Schultz \cite{SCH} has shown that Theorem \ref{thm:HT} is true when the GUE matrices are 
replaced by matrices of the Gaussian Orthogonal Ensemble (GOE) or by matrices of the 
Gaussian Symplectic Ensemble (GSE). Capitaine and Donati-Martin \cite{CD} and, 
very recently, Anderson \cite{AND} have shown the analogue for certain Wigner matrices.
\\
\\An other significant extension of Haagerup and Thorbj\o rnsen's result was
obtained by one author (C. M.) in \cite{MAL},
where he 
showed
that if in addition to 
independent GUE matrices, one also has an extra family of 
independent matrices with strong limiting distribution, 
the result still holds.

\begin{Th}[ \cite{MAL} The strong asymptotic freeness of $\mathbf X_N,\mathbf Y_N$]\label{thm:CM} ~\\
For any integer $N\geq 1$, we consider
\begin{itemize}
	\item a $p$-tuple $\mathbf X_N$ of $N \times N$ independent GUE matrices,
	\item a $q$-tuple $\mathbf Y_N$ of $N\times N$ matrices, possibly random but independent of $\mathbf X_N$.
\end{itemize}
The above random matrices live in the 
$\mathcal C^*$-probability space $(\MN, .^*, \tau_N, \| \cdot \|)$,
where $\tau_N$ is the normalized trace on the set $\MN$ of $N\times N$ matrices. In a $\mathcal C^*$-probability space $(\mathcal A, .^*, \tau, \| \cdot \|)$ with faithful trace, we consider
\begin{itemize}
	\item a free semicircular system $\mathbf x$ of $p$ variables,
	\item a $q$-tuple $\mathbf y$ of non commutative random variables, free from $\mathbf x$.
\end{itemize}

\noindent If $\mathbf y$ is the strong limit in distribution of $\mathbf Y_N$, then $(\mathbf x, \mathbf y)$ is the strong limit in distribution of $(\mathbf X_N, \mathbf Y_N)$.

\end{Th}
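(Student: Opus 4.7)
My plan is to adapt the Haagerup--Thorbj\o rnsen strategy of \cite{HT} to the presence of the extra matrices $\mathbf{Y}_N$. Convergence in distribution is the classical part: by Voiculescu's asymptotic freeness theorem, conditionally on $\mathbf{Y}_N$ the GUE family $\mathbf{X}_N$ becomes asymptotically free from $\mathbf{Y}_N$, and combined with the distributional convergence of $\mathbf{Y}_N$ to $\mathbf{y}$ one gets the joint distributional convergence of $(\mathbf{X}_N,\mathbf{Y}_N)$ to $(\mathbf{x},\mathbf{y})$. The heart of the matter is the almost sure convergence of operator norms.

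First, by the Haagerup--Thorbj\o rnsen linearization trick, it suffices to prove, for every self-adjoint matrix-valued pencil
\[
L(\mathbf{X}_N,\mathbf{Y}_N) \,=\, L_0\otimes \mathbf 1_N \,+\, \sum_i L_i\otimes X_i\toN \,+\, \sum_j\big(M_j\otimes Y_j\toN + M_j^*\otimes Y_j^{(N)*}\big)
\]
with coefficients in $\Mk$, that the operator norm of $L(\mathbf{X}_N,\mathbf{Y}_N)$ in $\MkMn$ converges almost surely to the operator norm of $L(\mathbf{x},\mathbf{y})$ in $\Mk\otimes\mathcal A$. The workhorse is then the $\Mk$-valued resolvent
\[
G_N(z) \,=\, \traceN\Big[\big(z\otimes\mathbf 1_N-L(\mathbf{X}_N,\mathbf{Y}_N)\big)^{-1}\Big], \qquad z\in\Mk,\ \imag z>0 .
\]

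Conditioning on $\mathbf{Y}_N$ and using the Gaussian integration-by-parts formula for the entries of the $X_i\toN$, I would derive a \emph{master equation} (Schwinger--Dyson type) of the shape $\esp[G_N(z)]=\Phi_{\mathbf{Y}_N}\big(\esp[G_N(z)],z\big)+R_N(z)$, where the functional $\Phi_{\mathbf{Y}_N}$ depends on $\mathbf{Y}_N$ only through $\Mk\otimes\Mk$-valued normalized traces of resolvent-type expressions, and the remainder satisfies $\|R_N(z)\|=O(1/N^2)$ uniformly for $z$ ranging on a contour at distance $N^{-\alpha}$ from the real axis. The corresponding limiting quantity $G_\infty(z)=\trace\big[(z-L(\mathbf{x},\mathbf{y}))^{-1}\big]$ satisfies the analogous equation \emph{exactly}, with $\mathbf{Y}_N$ replaced by $\mathbf{y}$. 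The strong convergence hypothesis on $\mathbf{Y}_N$ is precisely what is needed to ensure that $\Phi_{\mathbf{Y}_N}\to\Phi_{\mathbf{y}}$ uniformly on such contours (distributional convergence of the traces, plus a priori norm bounds to guarantee the resolvents stay bounded). A fixed-point/linearization argument around $G_\infty$ then upgrades this to $\|\esp[G_N(z)]-G_\infty(z)\|=O(1/N^2)$.

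The last two steps are standard. Gaussian concentration for the GUE gives $\|G_N(z)-\esp G_N(z)\|$ small with probability $1-e^{-cN^2}$, so by Borel--Cantelli one has $\|G_N(z)-G_\infty(z)\|=O(N^{-2+\e})$ almost surely. Finally, an inverse Stieltjes transform / Helffer--Sj\"ostrand calculus argument converts this into containment of the spectrum of $L(\mathbf{X}_N,\mathbf{Y}_N)$ in an arbitrarily small neighborhood of the spectrum of $L(\mathbf{x},\mathbf{y})$, which together with the already-established distributional convergence yields norm convergence. The step I expect to be the main obstacle is the derivation of the master equation: unlike for $\mathbf{X}_N$, no integration-by-parts is available for $\mathbf{Y}_N$, so one must treat $\mathbf{Y}_N$ as deterministic after conditioning and absorb it into an operator-valued equation whose coefficients live in an enlarged algebra; and one must extract \emph{second-order} (not merely first-order) cancellations in the Gaussian calculus to reach the $1/N^2$ remainder that makes Borel--Cantelli applicable.
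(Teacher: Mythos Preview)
This theorem is not proved in the present paper: it is quoted from \cite{MAL} as a known input, and the paper's contribution (Theorem~\ref{MainTh}) is to deduce the unitary-Haar analogue from it via the coupling argument of Section~\ref{sec:proof}. So there is no ``paper's own proof'' to compare your proposal against.

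That said, your sketch is an accurate high-level description of the strategy actually carried out in \cite{MAL}: linearization to matrix-valued self-adjoint pencils, a Schwinger--Dyson master equation obtained from Gaussian integration by parts (conditioning on $\mathbf Y_N$ and treating it as deterministic), comparison with the free subordination equation satisfied by $G_\infty$, a $1/N^2$ remainder estimate pushed to a shrinking neighborhood of the real axis, Gaussian concentration plus Borel--Cantelli, and finally an inverse Stieltjes/functional-calculus argument to conclude spectral inclusion. Your identification of the delicate point is also right: the absence of integration by parts for $\mathbf Y_N$ forces one to work in an operator-valued setting where the $\mathbf Y_N$-dependence sits in the coefficients of the fixed-point equation, and the strong convergence hypothesis on $\mathbf Y_N$ is exactly what makes those coefficients converge uniformly on the relevant contours. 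One refinement worth flagging: in \cite{MAL} the linearization is not quite to affine pencils in $\mathbf Y_N$ as you wrote, but rather the $\mathbf Y_N$ are absorbed into the ``constant'' part of the operator-valued resolvent, so that the Gaussian calculus only touches the $\mathbf X_N$ variables; this is what makes the master equation tractable.
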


\noindent In other words, if we assume that almost surely, for all polynomials $P$ in $2q$ non commutative indeterminates, one has
\begin{eqnarray}
		\tau_N\big [ P(\mathbf Y_N, \mathbf Y_N^* )  \big ] & \underset{N\rightarrow \infty}{\longrightarrow} & \tau \big [ P(\mathbf y, \mathbf y^*)  \big],\label{Th1wCV} \\
		\big \| P(\mathbf Y_N, \mathbf Y_N^*)   \big \| & \underset{N\rightarrow \infty}{\longrightarrow} & \big \| P(\mathbf y, \mathbf y^*)  \big \|,\label{Th1sCV}
\end{eqnarray}
then, almost surely, for all polynomials $P$ in $p+2q$ non commutative indeterminates, one has
\begin{eqnarray}
		\tau_N\big [ P( \mathbf X_N,  \mathbf Y_N, \mathbf Y_N^* )  \big ] & \underset{N\rightarrow \infty}{\longrightarrow} & \tau \big [ P(\mathbf x,\mathbf y, \mathbf y^*) \big  ],\label{MainThEqVoic} \\
		\big  \| P( \mathbf X_N,  \mathbf Y_N,  \mathbf Y_N^*)  \big \| & \underset{N\rightarrow \infty}{\longrightarrow} & \big \| P( \mathbf x, \mathbf y,\mathbf y^*) \big \| \label{MainThEq}.
\end{eqnarray}

\noindent The convergence in distribution, stated in (\ref{MainThEqVoic}), is the content of Voiculescu's asymptotic freeness theorem.
We refer to \cite[Theorem 5.4.10]{AGZ}
for the original statement and for a proof. 
An alternative way to state \eqref{MainThEq} is the following interversion of limits: for any matrix $H_N = P( \mathbf X_N,  \mathbf Y_N, \mathbf Y_N^*)$, where $P$ is a fixed polynomial, if we denote $h=P( \mathbf x, \mathbf y, \mathbf y^*)$, then by the definition of the norm in terms of the state \eqref{NormTrace},
	\begin{eqnarray*}
		\lim_{N \rightarrow \infty}  \lim_{k \rightarrow \infty} \bigg( \tau_N\big [ (H_N^* H_N)^k  \big ] \bigg)^{\frac 1 {2k}} & =& \lim_{k \rightarrow \infty} \bigg( \tau\big [ (h^* h)^k  \big ] \bigg)^{\frac 1 {2k}}.
	\end{eqnarray*}

\noindent It is natural to wonder whether, instead of GUE matrices,  
the same property holds for unitary Haar matrices.
The main result of this paper is the following theorem.

\begin{Th}[The strong asymptotic freeness of $U_1\toN \etc U_p\toN,\mathbf Y_N$]\label{MainTh} ~\\
For any integer $N\geq 1$, we consider
\begin{itemize}
	\item a $p$-tuple $\mathbf U_N$ of 
$N \times N$ independent unitary Haar matrices,
	\item a $q$-tuple $\mathbf Y_N$ of $N\times N$ matrices, possibly random but independent of $\mathbf U_N$.
\end{itemize}
In a $\mathcal C^*$-probability space $(\mathcal A, .^*, \tau, \| \cdot \|)$ with faithful trace, we consider
\begin{itemize}
	\item a $p$-tuple $\mathbf u$ of free Haar unitaries,
	\item a $q$-tuple $\mathbf y$ of non commutative random variables, free from $\mathbf u$.
\end{itemize}
If $\mathbf y$ is the strong limit in distribution of $\mathbf Y_N$, then 
$(\mathbf u, \mathbf y)$ is the strong limit in distribution of $(\mathbf U_N, \mathbf Y_N)$.
\end{Th}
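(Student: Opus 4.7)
My plan is to follow the Haagerup--Thorbj\o rnsen (HT) scheme of \cite{HT,MAL}, adapted from the Gaussian to the Haar-unitary setting while accommodating the independent family $\mathbf{Y}_N$. A natural first attempt is to reduce the theorem to Theorem \ref{thm:CM} via polar decomposition: building $p$ independent complex Ginibre matrices $G_j^{(N)}=(X_j^{(N)}+iX_j'^{(N)})/\sqrt{2}$ from $2p$ independent GUEs and writing $U_j^{(N)}=G_j^{(N)}|G_j^{(N)}|^{-1}$ would, in the limit, recover free Haar unitaries free from $\mathbf{y}$. However, this direct reduction stumbles at the last step: the map $x\mapsto x^{-1/2}$ cannot be uniformly approximated by polynomials on the spectrum of $G^{*}G$, which accumulates at $0$ both in finite $N$ (smallest singular value of order $N^{-1/2}$) and in the limit (the law of $|c|$ has positive density at $0$). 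Consequently, strong convergence of $(\mathbf{G}_N,\mathbf{Y}_N)$ transfers to strong convergence of $(\mathbf{U}_N,\mathbf{Y}_N)$ only in the strong operator topology, not in operator norm. I therefore work directly with the Haar unitaries.

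The strategy proceeds in three steps. First, \emph{linearization}: by Haagerup's trick, it suffices to prove that for every $k\geq 1$ and every self-adjoint element $M\in\Mk\otimes\mathbb{C}\langle\mathbf{u},\mathbf{u}^*,\mathbf{y},\mathbf{y}^*\rangle$ of degree one in $\mathbf{u},\mathbf{u}^*$, the spectrum of $M_N:=M(\mathbf{U}_N,\mathbf{Y}_N)$ converges in Hausdorff distance to the spectrum of $M_\infty:=M(\mathbf{u},\mathbf{y})$. Second, by a Helffer--Sj\"ostrand / Borel--Cantelli argument, this reduces to a \emph{master inequality} for the expected resolvent: there exists a fixed polynomial $\Phi$ such that
\[
\Big|\mathbb{E}\bigl[(\mathrm{tr}_k\otimes\tau_N)\bigl((z-M_N)^{-1}\bigr)\bigr]-(\mathrm{tr}_k\otimes\tau)\bigl((z-M_\infty)^{-1}\bigr)\Big|\leq \frac{C\,\Phi(|\mathrm{Im}\,z|^{-1})}{N^2}
\]
uniformly for $z$ in the upper half-plane. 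Third, a concentration-of-measure argument on $U(N)$ (Gromov's comparison with the sphere, yielding Gaussian-type concentration for Lipschitz functions of a Haar unitary) combined with Borel--Cantelli upgrades this expected convergence to almost-sure spectral convergence.

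The main obstacle is the master inequality. In the Gaussian case of \cite{HT,MAL}, the Schwinger--Dyson/integration-by-parts identity $\mathbb{E}[X_{ij}f(X)]=N^{-1}\mathbb{E}[\partial_{ij}f(X)]$ is a clean Stein-type formula. Its analogue for the Haar measure on $U(N)$---the Weingarten formula---is more delicate: it provides only a full $1/N$-expansion for $\mathbb{E}[U_{i_1 j_1}\cdots\overline{U_{i_k j_k}}]$, whose leading terms must be organized to match precisely the freeness identities for the limiting $\mathbf{u}$ (and hence yield the reduced free product construction). Isolating the leading order and controlling the remainder by $O(1/N^2)$ with only polynomial blow-up in $|\mathrm{Im}\,z|^{-1}$ is the technical heart of the proof. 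The auxiliary family $\mathbf{Y}_N$ is accommodated by conditioning on it: the Weingarten computation is performed with the coefficient matrices of the linearization (which now involve $\mathbf{Y}_N$) treated as deterministic, and the strong convergence hypothesis \eqref{Th1sCV} ensures these coefficients have convergent operator norm, so that the limiting state on the subalgebra generated by $\mathbf{u}$ is reached via free product with the algebra generated by $\mathbf{y}$.
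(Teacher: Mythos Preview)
Your proposal takes a fundamentally different route from the paper's. The paper does \emph{not} attempt a Haar-unitary master equation; in fact the authors explicitly remark in the introduction that they tried this and found it intractable: ``even though such an identity can be obtained for unitary matrices, it is very difficult to manipulate it in the spirit of \cite{HT}\ldots\ Part of the problem is that the unitary matrices are not selfadjoint\ldots\ and in this context the linearization trick and the identities do not seem to fit well together.'' Instead, the paper reduces Theorem~\ref{MainTh} to Theorem~\ref{thm:CM} (the GUE case, already established) by a coupling via inverse transform sampling. Writing a GUE matrix as $X_N=V_N\Delta_N V_N^*$, one sets $M_N=F_{X_N}(X_N)=V_N\,\mathrm{diag}(1/N,\ldots,N/N)\,V_N^*$; strong convergence of $(M_N,\mathbf Y_N)$ follows from Theorem~\ref{thm:CM} because $F_{X_N}\to F_x$ uniformly (Dini) and strong convergence is stable under continuous functional calculus. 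A Haar unitary with the same eigenvector matrix then satisfies $U_N=\exp\bigl(2\pi i\,F_{U_N}^{-1}(M_N)\bigr)$, and uniform convergence of $F_{U_N}^{-1}$ to the identity transfers strong convergence from $(M_N,\mathbf Y_N)$ to $(U_N,\mathbf Y_N)$. No resolvent estimate, no Weingarten calculus, no concentration argument is needed.

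Your plan, by contrast, has a genuine gap precisely at its declared ``technical heart'': you state the master inequality as a goal but give no mechanism to prove it, and the obstruction is concrete. After linearization the self-adjoint symbol has the form $a_0\otimes 1+\sum_j(a_j\otimes u_j+a_j^*\otimes u_j^*)+(\text{terms in }\mathbf y)$, so the resolvent $G_N(z)=(z-M_N)^{-1}$ involves both $U_j$ and $U_j^*$. Integration by parts on $U(N)$ applied to $\mathbb E[U_j G_N(z)]$ produces terms like $\mathbb E[U_jG_N(z)\,a_j^*U_j^*\,G_N(z)]$ minus trace-contractions, which do \emph{not} close at the level of the scalar $(\mathrm{tr}_k\otimes\tau_N)G_N(z)$ the way the Gaussian IBP does: in the GUE case one obtains a quadratic fixed-point equation in the resolvent, whereas here the simultaneous presence of $u$ and $u^*$ generates a hierarchy of mixed moments that the resolvent alone does not summarize. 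This is exactly the incompatibility the authors flag, and your proposal offers no device to break it. Your concentration step is fine in isolation, but without the master inequality there is nothing to concentrate around. Incidentally, your correct diagnosis that polar decomposition of Ginibre fails (because $t\mapsto t^{-1/2}$ is not uniformly approximable near $0$) is in the same spirit as the paper's successful idea: the paper also builds a functional-calculus bridge from Gaussian to Haar, but through the \emph{cumulative distribution function} $F_{X_N}$ rather than the polar part, which sidesteps any singularity.
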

\noindent 

\noindent In order to solve this problem, it looks at first sight natural to attempt to mimic the proof of 
\cite{HT} and write a Master equation in the case of unitary 
matrices. However, even though such an identity can be obtained for unitary matrices, 
it is very difficult to manipulate it in the spirit of \cite{HT} in order to obtain the desired norm
convergence. Part of the problem is that the unitary matrices are not selfadjoint, unlike the GUE matrices
considered in \cite{HT}, and in this context the linearization trick and the identities 
do not seem to fit well together. In order to bypass this problem, in this paper, we take a completely different route by building on 
Theorem \ref{thm:CM} and using a series of
folklore facts of classical probability and random matrix theory.
\\
\\Our method applies to prove the strong convergence in distribution of Haar matrices on the orthogonal 
and the symplectic groups by building on the result of Schultz \cite{SCH}, which is the analogue of 
Theorem \ref{thm:HT} for GOE or GSE matrices instead of GUE matrices. The analogue of 
Theorem \ref{thm:CM} does not exist yet. 
If one shows that the estimates of matrix valued Stieltjes transforms in \cite{MAL} can always be 
performed with the additional terms in the estimate of \cite{SCH}, then, following the lines of this paper, 
one gets Theorem \ref{thm:CM} for Haar matrices on the orthogonal and the symplectic groups, instead 
of the unitary group only. 
Therefore, in the following Theorem, we stick to proving 
the strong convergence of independent unitary, orthogonal or symplectic Haar matrices,
without ``constant'' matrices $\mathbf Y$:

\begin{Th}[The strong asymptotic freeness of independent Haar matrices]\label{thm:COE} ~\\
For any integer $N\geq 1$, let $U_1\toN \etc U_p\toN$ be a family of independent Haar matrices of one of the three classical groups.
Let $u_1 \etc u_p$ 
be free Haar unitaries in a $\mathcal C^*$-probability space with faithful state. 
Then, almost surely, for all polynomials $P$ in $2p$ non commutative indeterminates, one has
		$$\big \|P(U_1\toN \etc U_p\toN, U_1\toNs \etc U_p\toNs) \big \| \limN \big \|P(u_1 \etc u_p,u_1^* \etc u_p^*) \big \|,$$
where $\| \cdot \|$ denotes the operator norm in the left hand side and the $\mathcal C^*$-algebra in 
the right hand side.
\end{Th}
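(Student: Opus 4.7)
I would split Theorem \ref{thm:COE} into two cases. The unitary case is an immediate corollary of Theorem \ref{MainTh}: apply that theorem with $q=0$, so that the deterministic family $\mathbf Y_N$ is empty, and its conclusion reduces exactly to the desired strong convergence of $\mathbf U_N$ toward $\mathbf u$. Nothing further is required in this case.

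For the orthogonal and symplectic cases, the plan is to re-run the argument behind Theorem \ref{MainTh}, substituting Schultz's theorem---strong asymptotic freeness of $p$ independent GOE (resp.\ GSE) matrices---for the GUE-based input. The proof of Theorem \ref{MainTh} uses the full strength of Theorem \ref{thm:CM} only in order to couple the Haar unitaries with a deterministic family $\mathbf Y_N$ in the conclusion; since Theorem \ref{thm:COE} involves no such family, the bare (i.e.\ $\mathbf Y_N$-free) analogue of Theorem \ref{thm:HT} provided by Schultz is sufficient. The ``folklore'' reductions from classical probability and random matrix theory that convert strong convergence of the Gaussian ensemble into strong convergence of the corresponding Haar-distributed group matrices then carry through essentially verbatim, once the appropriate real or quaternionic constructions (for instance Haar measure on $O(N)$ and $Sp(N)$ realized via QR or polar decomposition of real/quaternionic Ginibre matrices) are put in place. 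This is precisely the situation the authors describe in the paragraph preceding Theorem \ref{thm:COE}: the full $O/Sp$ analogue of Theorem \ref{MainTh} would require extending the matrix-valued Stieltjes transform estimates of \cite{MAL} with the extra terms appearing in \cite{SCH}, but Theorem \ref{thm:COE}, having no $\mathbf Y_N$, does not.

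The main obstacle lies in verifying that every step of the reduction in the proof of Theorem \ref{MainTh} has a genuine orthogonal/symplectic counterpart. Two kinds of checks are needed. First, that Haar measure on $O(N)$ and $Sp(N)$ is realized from the real and quaternionic Gaussian ensembles in a way compatible with the operator-norm estimates used on the unitary side; this is classical, but one has to track constants through the relevant smoothness and concentration bounds. Second, that every invocation of unitary invariance in the unitary proof can be replaced by the weaker orthogonal or symplectic invariance without loss, and that the associated Weingarten-type computations remain tractable. Both checks are technical but do not require any new analytic input beyond Schultz's theorem, so that the combinatorial and analytic skeleton of the proof of Theorem \ref{MainTh} delivers the $O(N)$ and $Sp(N)$ cases of Theorem \ref{thm:COE} as well.
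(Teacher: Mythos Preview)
Your high-level plan matches the paper exactly: the unitary case is Theorem \ref{MainTh} with $q=0$, and for the orthogonal and symplectic cases the paper's proof (Section \ref{proofCOE}) is a single sentence instructing the reader to rerun the proof of Theorem \ref{MainTh} with the substitutions unitary/Hermitian/GUE $\to$ orthogonal/symmetric/GOE (resp.\ symplectic/self-dual/GSE), invoking Schultz's theorem in place of Theorem \ref{thm:CM}, and taking $\mathbf Y_N$ to be the already-treated Haar matrices in the recurrence. Your observation that the absence of deterministic $\mathbf Y_N$ in the conclusion is precisely what allows Schultz's bare result to suffice is also the point made in the paragraph preceding Theorem \ref{thm:COE}.

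Where your proposal drifts is in the technical checks you anticipate. The proof of Theorem \ref{MainTh} uses no QR or polar decomposition of Ginibre matrices, no concentration or smoothness bounds, and no Weingarten calculus. The actual mechanism is: (i) the spectral decomposition of conjugation-invariant random matrices (Proposition \ref{prop:SpDec}), which extracts a Haar-distributed eigenvector matrix $V_N$ from either the Gaussian ensemble or the Haar matrix; (ii) a coupling via cumulative distribution functions of the spectra (Lemma \ref{Coupling}), which realizes the Haar matrix as $\exp\big(2\pi i\,F_{U_N}^{-1}(M_N)\big)$ for the intermediate model $M_N=V_N\,\mathrm{diag}(1/N,\ldots,N/N)\,V_N^*$; and (iii) Dini's theorem, upgrading pointwise convergence of empirical cumulative functions to uniform convergence so that functional calculus (via Proposition \ref{Prop:ConvEqui}) propagates strong convergence from $X_N$ to $M_N$ to $U_N$. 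The only place group invariance enters is Proposition \ref{prop:SpDec}, which has a direct orthogonal/symplectic analogue by the same argument; no Weingarten-type computation appears anywhere. So the checks you list are not the ones actually needed, though the route you would take once you see the real proof of Theorem \ref{MainTh} is exactly the paper's.
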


\noindent Our paper is organized as follows.
Section \ref{sec:application} consists
of applications of the results stated above.
Among other examples, we show that the limit of 
complicated 
random matrix models involving 
unitary random matrices have norms that converge towards 
(or are bounded by) 
values predicted by the theory of free probability.
Sections \ref{sec:proof} and \ref{proofCOE} provide the proofs of Theorem \ref{MainTh} and Theorem \ref{thm:COE} respectively. 
Section \ref{sec:ProofCorSumProd} is dedicated to the proof of Corollary \ref{cor:SumProd}, stated in the next section.

\section{Applications}
\label{sec:application}

\noindent Our main result has the potential for many applications.

\subsection{The spectrum of Hermitian random matrices}

\subsubsection{Generalities on the strong convergence in distribution}

\noindent We first recall for convenience some facts about the strong convergence in distribution, mainly an equivalent formulation. Given a self-adjoint variable $h$ in a $\mathcal C^*$-probability space $(\mathcal A, .^*, \tau, \| \cdot \|)$, its spectral distribution $\mu_h$ is the unique probability measure that satisfies $\tau[h^k] = \int t^k \textrm d \mu(t)$ for any $k\geq 1$. This measure has compact support included in $\big [-\|h\|, \|h\| \big]$.
For any continuous map $f : \mathbb R$ to $\mathbb C$, the variable $f(h)$ is given by functional calculus, and coincides with the limit of $\big( P_n(h) \big)_{n\geq 1}$ in $\mathcal A$, where $( P_n)_{n\geq 1}$ is any Weierstrass's approximation of $f$ by polynomials. Given a (non self-adjoint) variable $x$ in $\mathcal A$, we set the self-adjoint variables $\Re x = \Big( \frac {x + x^*}2 \Big)$ and $\Im x = \Big( \frac {x - x^*}{2i} \Big)$, so that $x = \Re x + i \Im x$.
\\
\begin{Prop}[The strong convergence in distribution of self adjoint random variables]~\label{Prop:ConvEqui}
\\Let $\mathbf x_N = (x_1\toN \etc x_p\toN)$ and $\mathbf x = (x_1  \etc x_p )$ be  $p$-tuples of variables in $\mathcal C^*$-probability spaces, \break $(\mathcal A_N, .^*, \tau_N, \| \cdot \|)$ and $(\mathcal A, .^*, \tau, \| \cdot \|)$, with faithful states. Then, the following assertions are equivalent.
\begin{enumerate}
	\item $\mathbf x_N$ converges strongly in distribution to $\mathbf x$,
	\item for any continuous map $f_i, g_i: \mathbb R \to \mathbb C$, $i=1\etc p$, the family of 
	variables \break $\big( f_1( \Re x_1\toN), g_1( \Im x_1\toN) \etc f_p( \Re x_p\toN), g_p( \Im x_p\toN) \big)$ converges strongly in distribution to \break $\big( f_1( \Re x_1), g_1( \Im x_1) \etc f_p( \Re x_p), g_p( \Im x_p) \big)$,
	\item for any self-adjoint variable $h_N=P(\mathbf x_N)$, where $P$ is a fixed polynomial, $\mu_{h_N}$ converges in weak-$*$ topology to $\mu_h$ where $h=P( \mathbf x)$. Weak-$*$ topology means relatively to continuous functions on $\mathbb C$. Moreover, the support of $\mu_{h_N}$ converges in Hausdorff distance to the support of $\mu_{h}$, that is: for any $\eps >0$, there exists $N_0$ such that for any $N\geq N_0$, 
	\begin{equation}
		\mathrm{Supp }\big( \ \mu_{h_N} \ \big) \ \subset \ \mathrm{Supp }\big( \ \mu_{h} \ \big) \ + (-\eps,\eps).
	\end{equation}
	The symbol $\mathrm{Supp }$ means the support of the measure.
\end{enumerate}

\noindent In particular, the strong convergence in distribution of a single self-adjoint variable is its convergence in distribution together with the Hausdorff convergence of its spectrum.

\end{Prop}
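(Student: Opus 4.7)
The plan is to prove the cycle by combining spectral calculus with Weierstrass polynomial approximation on a common compact interval. Throughout, the norm-convergence in (1) delivers a uniform bound $\sup_{N,i}\|x_i\toN\|<\infty$, and hence a uniform bound on any fixed polynomial expression in $(\mathbf x_N, \mathbf x_N^*)$; this supplies the compact interval $[-M,M]$ on which spectral calculus and uniform polynomial approximation will be performed.

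To establish $(1)\Leftrightarrow (3)$: given a self-adjoint polynomial $h_N = P(\mathbf x_N)$ with limit $h$, moment convergence $\tau_N[h_N^k] \to \tau[h^k]$ is precisely weak-$*$ convergence $\mu_{h_N}\to \mu_h$. The key additional step is to upgrade norm convergence from polynomials to continuous functions on $[-M,M]$ by Weierstrass approximation, yielding $\|f(h_N)\|\to\|f(h)\|$ for every continuous $f$. The one-sided Hausdorff inclusion in (3) then follows by contradiction: any escaping sequence $\lambda_N \in \mathrm{Supp}(\mu_{h_N})$ with $\mathrm{dist}(\lambda_N, \mathrm{Supp}(\mu_h))\geq \eps$ would be contradicted by a continuous bump function $f\geq 0$ vanishing on $\mathrm{Supp}(\mu_h)$ and equal to $1$ at the accumulation point. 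Conversely, for $(3)\Rightarrow (1)$, decomposing $P(\mathbf x_N, \mathbf x_N^*) = H_N + iK_N$ into self-adjoint polynomial components reduces trace convergence to (3); and one uses $\|A_N\|^2=\|A_N A_N^*\|$, where the Hausdorff inclusion applied to $A_N A_N^*$ gives the $\limsup$ bound and weak convergence of $\mu_{A_N A_N^*}$ near a maximizing point $t_0 \in \mathrm{Supp}(\mu_{AA^*})$ gives the matching $\liminf$.

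To establish $(1)\Leftrightarrow (2)$: the direction $(2)\Rightarrow (1)$ is immediate by taking $f_i(t)=g_i(t)=t$, since then $x_i\toN = \Re x_i\toN + i\Im x_i\toN$ and any polynomial in $(\mathbf x_N, \mathbf x_N^*)$ is a polynomial in the Hermitian parts. The direction $(1)\Rightarrow (2)$ is again by Weierstrass approximation: each continuous $f_i, g_i$ is uniformly approximated on $[-M,M]$ by polynomials $P_i, Q_i$, and then for any polynomial $R$ in $2p$ non-commutative variables the trace and norm of $R\big(f_i(\Re x_i\toN), g_i(\Im x_i\toN)\big)_i$ are within an arbitrarily small error (uniformly in $N$) of those of $R\big(P_i(\Re x_i\toN), Q_i(\Im x_i\toN)\big)_i$, which is a polynomial in $(\mathbf x_N, \mathbf x_N^*)$ converging strongly by (1).

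The main obstacle is the one-sided Hausdorff inclusion in (3): mere weak-$*$ convergence permits eigenvalue mass to escape arbitrarily far from $\mathrm{Supp}(\mu_h)$, and ruling out such outliers genuinely requires the norm hypothesis. The Weierstrass upgrade of norm convergence from polynomials to arbitrary continuous functions is the technical bridge, and it depends crucially on the uniform bound on $\|x_i\toN\|$ furnished by (1).
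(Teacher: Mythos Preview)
Your proof is correct and follows essentially the same route as the paper: Weierstrass approximation on a common compact interval to pass from polynomials to continuous functions, a bump function vanishing on $\mathrm{Supp}(\mu_h)$ to rule out outliers for the Hausdorff inclusion, and the Hermitian decomposition $P=\tfrac12(P+P^*)+\tfrac{i}{2i}(P-P^*)$ for the trace convergence in $(3)\Rightarrow(1)$. The only difference is cosmetic: in the norm step of $(3)\Rightarrow(1)$ the paper writes the single line $\|m_N\|^2=\max\mathrm{Supp}(\mu_{m_N^*m_N})\to\max\mathrm{Supp}(\mu_{m^*m})=\|m\|^2$, implicitly using both halves of Hausdorff convergence, whereas you unpack this into a $\limsup$ bound from the displayed inclusion and a $\liminf$ bound from weak convergence near a maximizing point---a welcome clarification, but not a different argument.
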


\begin{proof} Assuming (1), the assertion (2) is obtained by Weierstrass's approximation of the functions $f_i$ and $g_i$ by polynomials in $p$ complex variables on the centered ball of radius $\sup_{N\geq 0} \| x_N \|$. The 
converse 
is obvious.
\\
\\Assuming (1), let us show (3). By Weierstrass's approximation, $h_N$ converges strongly in distribution to $h$. The convergence in distribution of $h_N$ to $h$ implies the weak-$*$ convergence of $\mu_{h_N}$ to $\mu_h$. For any $\eps >0$, let $f_\eps$ be a continuous map which takes the value 1 on the complementary of $\textrm{Supp } ( \mu_{h} )   + (-\eps,\eps)$ and 0 on $\textrm{Supp }( \mu_{h})$. Then, $\| f_\eps( \mathbf x_N) \| \limN \| f_\eps( \mathbf x) \| = \lim_k ( \int f_\eps( \mathbf x)^k \textrm d\mu_h )^{\frac 1 k}=0$. Hence, the support of $\mu_{h_N}$ is a subset of $\textrm{Supp } ( \mu_{h} ) + (-\eps,\eps)$ for $N$ large enough, as otherwise one could have $\|f_\eps(\mathbf x_N)\|=1$ eventually.
\\
\\Assuming (3), let us show (1). Let $P$ be a polynomial in $p$ variables and their conjugate. Denote $m_N = P( \mathbf x_N, \mathbf x_N^*)$ and $m = P( \mathbf x, \mathbf x^*)$. Then, $\tau_N[m_N] - \tau[m ] = \tau_N[Q(\mathbf x_N,\mathbf x_N^*)] - \tau[Q(\mathbf x,\mathbf x^*)] + i (\tau_N[R(\mathbf x_N,\mathbf x_N^*)] - \tau[R(\mathbf x,\mathbf x^*)]  )$ where $Q=\frac 1 2 (P+P^*)$ and $R=\frac 1 {2i}(R-R^*)$ gives Hermitian variables. By the assertion (3) and since the matrices are uniformly bounded in operator norm, we get the convergence in moments of the spectral distribution of $Q(\mathbf x_N,\mathbf x_N^*)$ and $R(\mathbf x_N,\mathbf x_N^*)$. Hence, we get the convergence in distribution of $x_N$ to $x$. Then, the convergence holds in weak-$*$ topology since $\mu_h$ has bounded support. Furthermore, 
	$$\|m_N\|^2 = \|m_N^* m_N\| = \max \textrm{Supp } \big( \ \mu_{m_N^* m_N} \ \big) \limN  \max \textrm{Supp } \big ( \ \mu_{m^* m} \ \big) = \|m^*m\| =  \|m\|^2.$$
\end{proof}

\subsubsection{The spectra of the sum and product of unitary invariant matrices}

The following is a consequence of our main result:

\begin{Cor}\label{cor:SumProd} Let $A_N,B_N$ be two $N \times N$ independent Hermitian random matrices. Assume that:
\begin{enumerate}
	\item the law of one of the matrices is invariant under unitary conjugacy,
	\item almost surely, the empirical eigenvalue distribution of $A_N$ (respectively $B_N$) converges to a compactly supported probability measure $\mu$ (respectively $\nu$),
	\item almost surely,
	for any neighborhood of the support of $\mu$ (respectively $\nu$),
	 for $N$ large enough, the eigenvalues of $A_N$ (respectively $B_N$) belong to 
	 the respective neighborhood.
\end{enumerate}
Then, one has
\begin{itemize}
	\item almost surely, for $N$ large enough, the eigenvalues of $A_N+B_N$ belong to a small neighborhood of the support of $\mu \boxplus \nu$, where $\boxplus$ denotes the free
	additive
	 convolution (see \cite[Lecture 12]{NS}).
	\item if moreover $B_N$ is nonnegative, then the eigenvalues of $(B_N)^{1/2}A_N(B_N)^{1/2}$ belong to a small neighborhood of the support of $\mu \boxtimes \nu$, where $\boxtimes$ denotes the free 
	multiplicative
	convolution (see \cite[Lecture 14]{NS}).
\end{itemize}
\end{Cor}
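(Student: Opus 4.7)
The plan is to reduce Corollary~\ref{cor:SumProd} to Theorem~\ref{MainTh} by exploiting the unitary invariance hypothesis. Without loss of generality, take $A_N$ to be the matrix whose law is unitarily invariant, and write $A_N = U_N D_N U_N^*$ where $D_N$ is the diagonal matrix of eigenvalues of $A_N$ and $U_N$ is Haar distributed on $\mathrm U(N)$. By unitary invariance of $A_N$ and independence of $B_N$ from $A_N$, the matrix $U_N$ is independent of the pair $(D_N, B_N)$. Since $A_N + B_N = U_N D_N U_N^* + B_N$ is a fixed polynomial in $(U_N, D_N, B_N)$, applying Theorem~\ref{MainTh} with $\mathbf U_N = U_N$ and $\mathbf Y_N = (D_N, B_N)$ would yield the strong convergence in distribution of $A_N + B_N$ to $U d U^* + b$, where $U$ is a free Haar unitary free from the strong limit $(d, b)$ of $\mathbf Y_N$.

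The main obstacle is that Theorem~\ref{MainTh} demands a joint strong limit for $(D_N, B_N)$; the hypotheses of the corollary guarantee (through Proposition~\ref{Prop:ConvEqui}) the strong convergence of each marginal, but not that of the pair, since the joint distribution depends on the relative position of the eigenvectors of $B_N$ with respect to the canonical basis. To circumvent this, I would work on an event of probability one on which $\|D_N\|$ and $\|B_N\|$ are eventually uniformly bounded, so that for every non-commutative polynomial $P$ the sequences $\tau_N[P(D_N, B_N)]$ and $\|P(D_N, B_N)\|$ are bounded. A Cantor diagonal extraction over a countable generating family of polynomials then produces a subsequence along which $(D_N, B_N)$ converges strongly in distribution to some pair $(d, b)$ in a $C^*$-probability space with faithful trace (after passing to the GNS quotient if necessary), with $\mu_d = \mu$ and $\mu_b = \nu$.

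Along the extracted subsequence, Theorem~\ref{MainTh} yields the strong convergence of $(U_N, D_N, B_N)$ to $(U, d, b)$, where $U$ is a Haar unitary free from $(d, b)$. A standard fact in free probability (Haar unitary conjugates of a subalgebra $\mathcal B$ are free from $\mathcal B$ when $U$ is free from $\mathcal B$) gives that $U d U^*$ is free from $b$. Since $U d U^*$ has distribution $\mu$ and $b$ has distribution $\nu$, the spectral distribution of $U d U^* + b$ is $\mu \boxplus \nu$. As this limit is independent of the subsequence, the full sequence $A_N + B_N$ converges strongly in distribution; by Proposition~\ref{Prop:ConvEqui} its spectrum converges in Hausdorff distance to $\mathrm{Supp}(\mu \boxplus \nu)$. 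For the product case, under $B_N \geq 0$, apply the same scheme to $B_N^{1/2} U_N D_N U_N^* B_N^{1/2}$ by adjoining $B_N^{1/2}$ to $\mathbf Y_N$, and conclude strong convergence to $b^{1/2} U d U^* b^{1/2}$, whose spectral distribution is $\mu \boxtimes \nu$.
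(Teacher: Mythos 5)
Your reduction of the corollary to Theorem~\ref{MainTh}, and your correct identification of the joint strong convergence of $\mathbf Y_N$ as the obstacle, are both on the right track. However, the subsequence argument you propose to resolve the obstacle has a genuine gap.

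A Cantor diagonal extraction does yield a subsequence along which both $\tau_N[P(D_N,B_N)]$ and $\|P(D_N,B_N)\|$ converge, for every noncommutative polynomial $P$. But this does \emph{not} produce strong convergence in distribution. Write $\phi(P)$ for the limit of the traces and $L(P)$ for the limit of the norms along the extracted subsequence. Strong convergence to a pair $(d,b)$ living in a $\mathcal{C}^*$-probability space with \emph{faithful} trace requires $\tau[P(d,b)]=\phi(P)$ \emph{and} $\|P(d,b)\|=L(P)$ simultaneously. The GNS representation associated to $\phi$ gives a space with faithful trace, but there one only has $\|P(d,b)\|_{\mathrm{GNS}}=\lim_k \phi\big((P^*P)^k\big)^{1/(2k)}\leq L(P)$, and the inequality can be strict. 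Concretely, with $D_N=\mathrm{diag}(1/N,2/N,\ldots,1)$ and $B_N$ the same diagonal matrix with the first and last entries swapped, $\tau_N[(D_N-B_N)^2]\to 0$ while $\|D_N-B_N\|\to 1$; the limiting trace then identifies $d$ with $b$, so $\|d-b\|_{\mathrm{GNS}}=0\neq 1$. Thus passing to the GNS quotient destroys the norm convergence, while keeping the norm $L$ destroys faithfulness of the state --- either way, the hypotheses of Theorem~\ref{MainTh} on $\mathbf y$ fail. Your phrase ``after passing to the GNS quotient if necessary'' cannot repair this.

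The paper avoids this precisely by invoking Lemma~\ref{lem:DiagMatrices} (drawn from~\cite[Corollary~2.1]{MAL}): one may rearrange the eigenvalues of the two \emph{diagonal} matrices to obtain a pair $(\tilde\Delta_{A_N},\tilde\Delta_{B_N})$ that converges strongly \emph{jointly}. To make this lemma applicable, the paper first reduces (via a random rotation) to the case where \emph{both} $A_N$ and $B_N$ are unitarily invariant, diagonalizes both as $U_N\tilde\Delta_{A_N}U_N^*$ and $V_N\tilde\Delta_{B_N}V_N^*$ with two independent Haar unitaries $U_N,V_N$, and then applies Theorem~\ref{MainTh} to $(U_N,V_N)$ with $\mathbf Y_N=(\tilde\Delta_{A_N},\tilde\Delta_{B_N})$. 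Your variant only diagonalizes $A_N$ and leaves $B_N$ undiagonalized, which also deprives you of the rearrangement freedom of the lemma. You would need to import Lemma~\ref{lem:DiagMatrices} (or some equivalent mechanism) to supply the joint strong convergence that Theorem~\ref{MainTh} requires; the compactness argument alone does not.
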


\noindent Corollary \ref{cor:SumProd} is proved in Section \ref{sec:ProofCorSumProd}. It can be applied in the following situation. 
Let $A_{N}$ be an $N\times N$ Hermitian random matrix whose law is invariant under unitary conjugacy. 
Assume that, almost surely, the empirical eigenvalue distribution of $A_N$ converges to a 
compactly supported 
probability measure $\mu$ and its eigenvalues belong to the support of $\mu$ for $N$ large enough. 
Let $\Pi_N$ be the matrix of the projection on first $p_N$ coordinates, 
$\Pi_N = \mathrm{diag} \ (\mathbf 1_{p_N}, \mathbf 0_{N-p_N}),$ 
where $p_{N}\sim tN$, $t\in (0,1)$. 
We consider the empirical eigenvalue distribution $\mu_N$ of the Hermitian random matrix
		$$\Pi_n A_n \Pi_n.$$
Then, it follows from a Theorem of Voiculescu \cite{VOI4} (see also \cite{COL}) that almost surely $\mu_ N$ converges weakly to the probability measure 
$\mu^{(t)}=\mu\boxtimes [(1-t)\delta_{0}+t\delta_{1}]$. This distribution 
is important in free probability theory because of its close relationship to the free additive convolution semigroup (see \cite[Exercise 14.21]{NS}). Besides, the empirical eigenvalue distribution $\mu_{N}$ 
was proved to be
a determinantal point process obtained as the push forward of a uniform measure in a 
Gelfand-Cetlin cone \cite{DEF}.
Very recently, it was proved by Metcalfe \cite{MET} that the eigenvalues satisfy universality 
property inside the bulk of the spectrum. 
Our result complement his, by showing that almost surely, for $N$ large enough there is no 
eigenvalue outside of any neighborhood of the 
spectrum of $\mu^{(t)}$.

\subsection{Questions from operator space theory}

\noindent We present some examples of norms of large matrices we can compute by Theorem \ref{MainTh}, as the norm of the limiting variables have been computed by other authors.

\subsubsection{The norm of the sum of unitary Haar matrices}
The following question was raised by Gilles Pisier to one author (B.C.) ten years ago:
let $U_{1}\toN \etc U_{p}\toN$ be $N\times N$ independent unitary Haar random matrices, $p\geq 2$. Is it true that almost surely:
\begin{equation}\label{eq:QuestionPisier}
		 \Big \| \sum_{i=1}^{p}U_{i}\toN \Big\| \limN 2\sqrt{p-1}.
\end{equation}

\noindent This question is very natural from the operator space theory 
point of view \cite[Chapter 20]{PIS}, and was still open before this paper. 
Haagerup and Thorbj\o rnsen's theorem \cite{HT} have proved that the 
the convergence \eqref{eq:QuestionPisier} is true 
when $U_{1}\toN \etc U_{p}\toN$ are 
certain sequence of independent large unitary matrices (non Haar distributed). 
Our main theorem implies that \eqref{eq:QuestionPisier} is true almost surely 
when they are i.i.d. unitary Haar matrices. 
Indeed, $2\sqrt{p-1}$ is the norm of 
the sum of $p$ free Haar unitaries by a result of Akemann and 
Ostrand \cite{AO}: they have proved that if $u_i$ are the generators of the free group von Neumann algebra,
then 
	\begin{equation} \label{AkOs}
		 \Big\| \sum_{i=1}^{p} a_i u_{i} \Big \| = \min_{t\geq 0}\Big\{ 2t + \sum_{i=1}^p \big( \sqrt{t^2 + |a_i|^2} -t \big) \Big\}.
	\end{equation}
 And if $a_1=\ldots =a_p=1$ they prove that the minimizer of the right hand side is $2\sqrt{p-1}$.
 \\
 \\By Theorem \ref{thm:COE} and \eqref{AkOs}, we get that, for independent Haar matrices $U_1\toN \etc U_p\toN$ on the orthogonal, unitary or symplectic group,
almost surely one has
		$$ \Big\| \sum_{i=1}^{p} a_i U_{i}\toN \Big \|  \limN \min_{t\geq 0}\Big\{ 2t + \sum_{i=1}^p \big( \sqrt{t^2 + |a_i|^2} -t \big) \Big\},$$
		which is a generalization of (\ref{eq:QuestionPisier}).

\subsubsection{The sum of Haar matrices along with their conjugate} In the same vein, by a result of Kesten \cite{KES}, the norm of the sum of $p$ free Haar unitaries 
and of their conjugate equals $2\sqrt{2p-1}$. Hence, we get from our result that almost surely one has
		$$\Big \| \sum_{i=1}^{p} \big( U_{i}\toN + U_i\toNs \big) \Big\| \limN 2\sqrt{2p-1}.$$
\noindent Remark that this result is not true for random unitary matrices distributed according to the uniform measure on the set of permutation matrix. Indeed, in that case $2p$ is always an eigenvalue of the matrix since $ \sum_{i} ( U_{i}\toN + U_i\toNs)$ is the adjacency matrix of a $2p$-regular graph. The convergence of the second largest eigenvalue to $2\sqrt{2p-1}$, known as Alon's conjecture \cite{ALO86}, has been proved recently by Friedman \cite{FRI03}.

\subsubsection{The sum of Haar matrices, matrix valued case}
Lehner \cite{LEH} 
has proved that for $u_1 \etc u_p$ free Haar unitaries and $a_0, a_1 \etc a _p$ Hermitian $k$ by $k$ matrices
	\begin{equation} \label{eq:Lehner}
		\bigg \| a_0 \otimes \mathbf 1 + \sum_{i=1}^p a_i \otimes u_i \bigg\|   =   \inf_{b >0} \bigg\| b^{\frac 1 2} \Big(  \big( \mathbf 1_k + ( b^{-\frac 1 2} a_i b^{-\frac 1 2})^2 \big)^{\frac 1 2} - \mathbf 1_k \Big) b^{\frac 1 2} \bigg\|,
	\end{equation}

\noindent where the infimum is over all positive definite invertible $k$ by $k$ matrices $b$. Recall that from Theorem \ref{thm:COE} we can deduce 
the following corollary (see \cite[Proposition 7.3]{MAL} for a proof).
\begin{Cor}\label{cor:BlockMatrices} Let $\mathbf U_N$ be a family of independent Haar matrices one of the three classical groups. Let $\mathbf u$ be a family of free Haar unitaries. Let $k\geq 1$ be an integer. Then, for any polynomial $P$ with coefficients in $\Mk$, almost surely one has
		$$\| P(\mathbf U_N, \mathbf U_N^*) \| \limN \| P(\mathbf u, \mathbf u^*) \|,$$
where $\| \cdot \|$ stands in the left hand side for the operator norm in $\mathrm{M}_{kN}(\mathbb C)$ 
and in the right hand side for the $\mathcal C^*$-algebra norm in $\mathrm{M}_{k}(\mathcal A)$.
\end{Cor}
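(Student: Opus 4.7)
The plan is to upgrade the scalar strong convergence of Theorem~\ref{thm:COE} to matrix coefficients via the standard $C^{\ast}$-algebraic principle that every injective $\ast$-homomorphism is automatically completely isometric. I work on the almost-sure event on which $\mathbf U_N$ converges strongly in distribution to $\mathbf u$; on this event, $\|R(\mathbf U_N,\mathbf U_N^*)\| \to \|R(\mathbf u,\mathbf u^*)\|$ for every non-commutative $\ast$-polynomial $R$, by Theorem~\ref{thm:COE}.

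Fix a free ultrafilter $\omega$ on $\mathbb{N}$ and form the $C^{\ast}$-ultraproduct $\mathcal B_\omega = \prod_N \mathrm{M}_N(\mathbb C)/\mathcal I_\omega$, where $\mathcal I_\omega$ is the ideal of sequences $(a_N)$ with $\lim_\omega \|a_N\| = 0$. The $\ast$-homomorphism $\phi_\omega \colon \mathbb C\langle \mathbf x,\mathbf x^*\rangle \to \mathcal B_\omega$ sending $x_i$ to the class of $(U_i^{(N)})_N$ satisfies $\|\phi_\omega(R)\| = \lim_\omega \|R(\mathbf U_N,\mathbf U_N^*)\| = \|R(\mathbf u,\mathbf u^*)\|$ for every polynomial $R$. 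Consequently, $\phi_\omega$ factors through the $C^{\ast}$-subalgebra $\mathcal C := C^\ast(\mathbf u,\mathbf u^*)\subseteq \mathcal A$, extending by density to an isometric $\ast$-embedding $\iota \colon \mathcal C \hookrightarrow \mathcal B_\omega$.

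I then invoke the fact that any injective $\ast$-homomorphism between $C^{\ast}$-algebras is completely isometric, so that $\mathrm{id}_{\mathrm{M}_k}\otimes \iota \colon \mathrm{M}_k(\mathcal C)\to \mathrm{M}_k(\mathcal B_\omega)$ is isometric. Applied to $P(\mathbf u,\mathbf u^*)\in \mathrm{M}_k(\mathcal C)\subseteq \mathrm{M}_k(\mathcal A)$, and using the canonical identification $\mathrm{M}_k(\mathcal B_\omega) \cong \bigl(\mathrm{M}_{kN}(\mathbb C)\bigr)_\omega$ (valid because $\mathrm{M}_k$ is finite-dimensional), this gives
\[
\|P(\mathbf u,\mathbf u^*)\|_{\mathrm{M}_k(\mathcal A)} \;=\; \lim_\omega \|P(\mathbf U_N,\mathbf U_N^*)\|_{\mathrm{M}_{kN}(\mathbb C)}.
\]
Since the identity holds for every free ultrafilter $\omega$, the ordinary limit of the sequence exists and equals the stated value.

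The main difficulty is conceptual rather than computational: one must recognize that the passage from scalar to matrix-valued polynomial norms is carried entirely by the $C^{\ast}$-algebraic fact that injective $\ast$-homomorphisms are completely isometric, a fact which the ultraproduct packages cleanly into an honest isometric embedding. A more hands-on alternative would combine Proposition~\ref{Prop:ConvEqui} with the reformulation $P(\mathbf U_N,\mathbf U_N^*) = Q(\mathbf 1_k \otimes \mathbf U_N,\, E_{ij}\otimes \mathbf 1_N)$ as a scalar polynomial in amplified unitaries and matrix units; the trace-level convergence is then immediate, but the Hausdorff convergence of spectra still requires essentially the same complete-isometry input.
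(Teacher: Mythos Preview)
Your argument is correct. The paper itself does not give a proof of this corollary; it simply refers the reader to \cite[Proposition~7.3]{MAL}. Your ultraproduct argument is a clean and standard way to carry out this deduction: the scalar strong convergence of Theorem~\ref{thm:COE} yields an isometric $\ast$-embedding of $C^\ast(\mathbf u)$ into the $C^\ast$-ultraproduct of the matrix algebras, and the automatic complete isometry of injective $\ast$-homomorphisms then transfers this to $\mathrm{M}_k$-coefficients. The identification $\mathrm{M}_k(\mathcal B_\omega)\cong (\mathrm{M}_{kN}(\mathbb C))_\omega$ is unproblematic for fixed finite $k$, and boundedness of the sequence $\bigl(\|P(\mathbf U_N,\mathbf U_N^*)\|\bigr)_N$ (immediate since the $U_i^{(N)}$ are unitary) lets you pass from ``same limit along every free ultrafilter'' to an honest limit.

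The argument in \cite{MAL} is in the same spirit but is packaged without ultrafilters: one works directly with $\limsup$ and $\liminf$, obtaining the lower bound $\liminf_N\|P(\mathbf U_N,\mathbf U_N^*)\|\geq \|P(\mathbf u,\mathbf u^*)\|$ from convergence of moments of $(P^*P)^\ell$ and faithfulness of $\tau_k\otimes\tau$, and the upper bound from the isometric embedding of $C^\ast(\mathbf u)$ into the corona-type quotient $\prod_N \mathrm{M}_N(\mathbb C)/\bigoplus_N \mathrm{M}_N(\mathbb C)$ (whose norm is the $\limsup$), again invoking complete isometry. The two approaches are essentially equivalent; yours is arguably tidier because the ultraproduct gives both inequalities at once.
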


\noindent We then deduce that the norm of block matrices of the form $ a_0 \otimes \mathbf 1 + \sum_{i=1}^p a_i \otimes U_i\toN$, where $a_0 \etc a_1$ are Hermitian matrices, converges almost surely to the quantity \eqref{eq:Lehner} computed by Lehner.

\subsubsection{Application of Fell's absorption principle} For an other application of Corollary \ref{cor:BlockMatrices}, recall Fell's absorption principle \cite[Proposition 8.1]{PIS}: for any $k$ by $k$ unitary matrices $a_1 \etc a_p $ and $u_1Ê\etc u_p$ a free Haar unitaries, one has
	\begin{equation*}
		\Big \| \sum_{i=1}^{p}  a_i \otimes u_i \Big\| = \Big \| \sum_{i=1}^{p}  u_i \Big\| = 2 \sqrt{p-1}.
	\end{equation*}

\noindent By Corollary \ref{cor:BlockMatrices} we get for any $k \times k$ unitary 
matrices $a_1 \etc a_p$, almost surely one has
		 $$\Big \| \sum_{i=1}^{p}a_iÊ\otimes U_{i}\toN \Big\| \limN 2\sqrt{p-1},$$

\noindent which solves a question of Pisier in \cite[Chapter 20]{PIS}.

\subsection{Estimates on the norm of random matrices}

\subsubsection{Haagerup's inequalities} Let $\mathbf u=(u_1 \etc u_p)$ be free Haar unitaries 
in a $\mathcal C^*$-probability space $(\mathcal A, .^*, \tau, \| \cdot \| )$ with faithful state. 
For any integer $d\geq 1$, we denote by $W_d$ the set of reduced $^*$-monomials in $p$ indeterminates $\mathbf x = (x_1 \etc x_p)$ of length $d$:
\begin{eqnarray*}
		 W_d &=  & \Big\{ \ P= x_{j_1}^{\varepsilon_1} \dots  x_{j_d}^{\varepsilon_d} \ \Big | \ j_1 \neq \dots \neq j_d, \ \varepsilon_j \in \{1, *\}  \ \forall j=1\etc d \ \Big \}.
\end{eqnarray*}
In 1979, Haagerup \cite{HAA} has shown that one has 
\begin{equation}\label{eq:HaaIneq}
		\Big \| \sum_{n\geq 1} \alpha_n P_n(\mathbf u)  \Big \| \leq (d+1) \| \alpha\|_2,
\end{equation}
for any sequence $(P_n)_{n\geq 1} $ of elements in $W_d$ and sequence 
$\alpha =(\alpha_n)_{n \geq1 }$ of complex numbers whose $\ell^2$-norm is denoted by
		$$\| \alpha \|_2 = \sqrt{ \sum_{n\geq 1} |\alpha|^2}.$$
This result, known as Haagerup's inequality, has many applications (for example, estimates
of return probabilities of random walks on free groups) 
and has been generalized in many ways. 
For instance, Buchholz has generalized (\ref{eq:HaaIneq}) in an estimate of 
$\sum_{n\geq 1} a_n \otimes  x_n$, where the $a_n$ are now $k\times k$ matrices. 
Let $\mathbf U_N$ be a family of $p$ independent $N \times N$ unitary Haar matrices. 
As a byproduct of our main result, we get that
$$ \lim_{N \rightarrow \infty} \| \sum_{n\geq 1} \alpha_n P_n(\mathbf U_N) \| \leq (d+1) \| \alpha \|_2,$$
where for any $n\geq 1$, the polynomial $P_n$ is in $W_d$.

\subsubsection{Kemp and Speicher's inequality} Kemp and Speicher \cite{KS} have generalized Haagerup's inequality for $\mathcal R$-diagonal 
elements in the so-called holomorphic case 
(with polynomials in the variables, but not their adjoint). 
Theorem \ref{MainTh} established, 
the consequence for random matrices sounds relevant since it allows to consider combinations of 
Haar and deterministic matrices, 
and then get a bound for its operator norm. 
The result of \cite{KS} we state below has been generalized by 
de la Salle \cite{SAL} in the case where the non commutative random variables have matrix coefficients. 
This situation could be interesting for practical applications, where block random matrices are sometimes 
considered (see \cite{TV} for applications of random matrices in telecommunication). 
Nevertheless, we only consider the scalar version for simplicity.
\\
\\Recall that a non commutative random variable $a$ is called an $\mathcal R$-diagonal element if it 
can be written $a = u y$, for $u$ a Haar unitary free from $y$ (see \cite{NS}). 
Let $\mathbf a = (a_1 \etc a_p)$ be a family of free, identically distributed $\mathcal R$-diagonal 
elements in a $\mathcal C^*$-probability space $(\mathcal A, .^*, \tau, \| \cdot \| )$. 
We denote by $W_d^+$ the set of reduced monomials of length $d$ in variables $\mathbf x$ (and not its conjugate), i.e.
		$$W_d^+ =  \Big\{ \  x_{j_1} \dots  x_{j_d} \  \Big | \  j_1 \neq \dots \neq j_d \ \Big \}.$$
Kemp and Speicher have shown the following, where the interesting fact is that the 
constant $(d+1)$ is replace by a constant of order $\sqrt{d+1}$: for any sequence $(P_n)_{n\geq 1} $ 
of elements of $W_d^+$ and any sequence $\alpha = (\alpha_n)_{n\geq 1}$, one has
\begin{equation}\label{eq:HaaIneq2}
		\Big \| \sum_{n\geq 1} \alpha_n P_n(\mathbf a) \Big \| \leq e \sqrt{d+1} \Big \| \sum_{n\geq 1} \alpha_n P_n(\mathbf a)Ê\Big \|_2,
\end{equation}
where $\| \cdot \|_2$ denotes the $L^2$-norm in $\mathcal A$, given by 
$\|x\|_2 = \tau[x^*x]^{1/2}$ for any $a$ in $\mathcal A$. In particular, if $\mathbf a = \mathbf u$ is a family of free unitaries
(i.e. $y=\mathbf 1$) then we get $\| \sum_{n\geq 1} \alpha_n P_n(\mathbf u) \Big \|_2 = \| \alpha \|_2$, so that 
(\ref{eq:HaaIneq2}) is already an improvement of (\ref{eq:HaaIneq}) without the 
generalization on $\mathcal R$-diagonal elements.
\\
\\Now let $\mathbf U_N = (U_1\toN \etc U_p\toN), \mathbf V_N =(V_1\toN \etc V_p\toN)$ be families of 
$N \times N$ independent unitary Haar matrices and $\mathbf Y_N=(Y_1\toN \etc Y_p\toN)$ 
be a family of $N \times N$ deterministic Hermitian matrices. Assume that for any $j=1\etc p$, the empirical 
spectral distribution of $Y_j\toN$ converges weakly to a measure $\mu$ (that does not depend on $j$) and 
that for $N$ large enough, the eigenvalues of $Y_j\toN$ belong to a small neighborhood of the support of 
$\mu$. We set for any $j=1\etc p$ the random matrix 
		$$A_j\toN = U_j\toN  Y_j\toN V_j\toNs.$$
From Theorem \ref{MainTh} and \cite[Corollary 2.1]{MAL}, we can deduce that almost surely the family 
$\mathbf A_N = (A_1\toN \etc A_p\toN )$ converges strongly in law to a family $\mathbf a$ of free $\mathcal R$-diagonal elements, identically distributed. Hence, inequality (\ref{eq:HaaIneq2}) gives: for any polynomials $P_n$ in $W_d^+$, $n\geq 1$,
		$$\lim_{N \rightarrow \infty} \| \sum_{n\geq 1} \alpha_n P_n(A_1\toN \etc A_p\toN) \|   \leq e \sqrt{d+1}\Big \| \sum_{n\geq 1} \alpha_n P_n(\mathbf a)Ê\Big \|_2 .$$

\section{Proof of Theorem \ref{MainTh}}
\label{sec:proof}

\noindent We consider a unitary Haar matrix $U_N$, 
independent of
a family of matrices $\mathbf Y_N$, 
having almost surely a strong limit in distribution. We show that almost surely $(U_N,\mathbf Y_N)$ has almost surely a strong limit in distribution. As it is known that $(U_N,\mathbf Y_N)$ 
converges in distribution \cite[Theorem 5.4.10]{AGZ}, the only thing we have to show 
is the convergence of norms. This will show Theorem \ref{MainTh} by recurrence on the number 
of Haar matrices. Moreover, the problem can be simplified in the following way (see \cite[Section 3]{MAL}):
\begin{itemize}
	\item one can reason conditionally, and then assume that the matrices of $\mathbf Y_N$ are deterministic,
	\item one may assume that the matrices of $\mathbf Y_N$ are Hermitian by considering their Hermitian and anti-Hermitian parts,
	\item it is sufficient to prove that for any polynomial $P$, almost surely the norm of $\big\| P(\mathbf U_N, \mathbf U_N^*, \mathbf Y_N)\big\|$ converges, rather than ``almost surely, for any polynomial''.
\end{itemize}

\noindent The keystone of the proof is the use of a classical coupling of real random variables, namely the inverse transform sampling method, for Hermitian matrices (Lemma \ref{Coupling} below). It allows us to get the strong convergence of $(U_N,\mathbf Y_N)$ from the strong convergence of $(X_N,\mathbf Y_N)$, where $X_N$ is a GUE matrix independent of $\mathbf Y_N$, for which we know the strong convergence by Theorem \ref{thm:CM}. For that purpose, we will first go through an intermediate problem. We use the coupling to prove in Lemma \ref{Step1} the strong convergence of $(M_N, \mathbf Y_N)$, where $M_N$ is the unitary invariant random matrix whose spectrum is $\big\{ \frac 1 N  \etc   \frac {N-1}N ,  \frac N N \big\}$. 
From this, we  deduce that the strong convergence holds for $(Z_N, \mathbf Y_N)$, where $Z_N$ is any unitary invariant random matrix, independent of $\mathbf Y_N$, whose spectrum is $\big\{ \gamma_N\big(\frac 1 N\big)  \etc \gamma_N\big (\frac {N-1}N \big) ,  \gamma_N\big( \frac N N\big) \big\}$ for $\gamma_N:[0,1] \to \mathbb C$ a random map converging uniformly to a continuous map. We finally use the coupling method again, to remark that a unitary Haar matrix could be written as above.

\subsection{Preliminaries}

\noindent Let $a$ be a self-adjoint element in a $\mathcal C^*$-algebra, that is $a^*=a$. Denote by $\mu_a$ its spectral distribution, i.e. $\mu_a$ is the unique probability measure on $\mathbb R$ such that for any $k\geq 1$, $\int t^k \textrm d\mu_a(t) = \tau[a^k].$ This measure has compact support included in $\big[-\|a\|, \|a\| \big]$. Denote by $F_a$ its cumulative function, satisfying, for all $t$ in $\mathbb R$:
	\begin{equation}
		F_a(t) := \mu \big( -]\infty, t] \big ).
	\end{equation}
We set the generalized inverse of $F_a$: for any $s$ in $]0,1]$
	\begin{equation}
		F^{-1}_{a}(s) = \inf\big \{ t \in [-\pi,\pi] \ \big | \ F_{a}(t) \geq s \big\}.
	\end{equation}

\noindent By the inverse method for random variables \cite[Chapter two]{DEV86}, we get the following lemma.
\\
\begin{Lem}[The coupling of self-adjoint variables and Hermitian matrices by cumulative functions]~\label{Coupling}
\begin{enumerate}
	\item Let $a,b$ be two self-adjoint non commutative variables. Denote the self-adjoint variables $\tilde b = F_b^{-1} \circ  F_a(a)$ given by functional calculus. If $\mu_a$ have no discrete part (i.e. $\mu_a\big(\{t\}\big) =0$ for any $t$ in $\mathbb R$), then $\tilde b$ has the same distribution as $b$.
	\item Let $A_N$ and $B_N$ be two Hermitian matrices (living in the $\mathcal C^*$-probability space \break $(\MN, .^*, \tau_N, \| \cdot \|)$). Write the matrices $A_N = V_{A_N} \ \Delta_{A_N} \ V_{A_N}^*$ and $ B_N = V_{B_N} \ \Delta_{B_N} \ V_{B_N}^*$, with $V_{A_N}, V_{B_N}$ unitary matrices, such that the entries of $\Delta_{A_N}$ and $\Delta_{B_N}$ are non decreasing along the diagonal. Assume that diagonal entries of $\Delta_{A_N}$ are distinct. We set the matrix
	\begin{equation}
		M_N := V_{A_N} \ \textrm{diag }\Big( \frac 1 N  \etc   \frac {N-1}N ,  \frac N N  \Big) \  V_{A_N}^*.
	\end{equation}
	Then $M_N = F_{A_N}(A_N)$ and $F^{-1}_{B_N}(M_N) = V_{A_N} \ \Delta_{B_N} \ V_{A_N}^*$.
\end{enumerate}
\end{Lem}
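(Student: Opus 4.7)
The proof is a translation of the classical inverse transform sampling method into the non-commutative spectral calculus framework, and should be essentially routine once the right language is used.

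For assertion (1), the plan is to rely on the general fact that for a self-adjoint element $a$ in a $\mathcal{C}^*$-probability space and a bounded Borel function $f$, the spectral distribution of $f(a)$ equals the pushforward $f_*\mu_a$. This reduces the claim to the measure-theoretic identity $(F_b^{-1}\circ F_a)_*\mu_a = \mu_b$, which I would decompose into two standard probabilistic steps: first, that the no-atom hypothesis on $\mu_a$ makes $F_a$ continuous and forces $(F_a)_*\mu_a$ to be the Lebesgue measure on $[0,1]$; and second, that $(F_b^{-1})_*(\mathrm{Leb}_{[0,1]})=\mu_b$, which follows from the standard equivalence $F_b^{-1}(s)\leq t \iff s \leq F_b(t)$, giving $\mathrm{Leb}\big(\{s : F_b^{-1}(s)\leq t\}\big)=F_b(t)$ for all $t$.

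For assertion (2), I would simply diagonalize. Writing the distinct, non-decreasingly ordered eigenvalues of $A_N$ as $\lambda_1 < \cdots < \lambda_N$, the cumulative function $F_{A_N}$ takes the value $i/N$ at $\lambda_i$, so functional calculus gives $F_{A_N}(A_N) = V_{A_N}\,\mathrm{diag}(1/N,\ldots,N/N)\,V_{A_N}^* = M_N$. For the second identity, letting $\mu_1\leq\cdots\leq\mu_N$ denote the ordered eigenvalues of $B_N$, the definition of $F^{-1}_{B_N}$ immediately yields $F^{-1}_{B_N}(i/N)=\mu_i$ (the smallest $t$ such that at least $i$ of the $\mu_j$ lie in $(-\infty,t]$), and therefore $F^{-1}_{B_N}(M_N) = V_{A_N}\,\mathrm{diag}(\mu_1,\ldots,\mu_N)\,V_{A_N}^* = V_{A_N}\,\Delta_{B_N}\,V_{A_N}^*$.

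There is no genuine technical obstacle here; the content is the classical inverse-CDF coupling, repackaged so that it can be fed into functional calculus. The only minor subtlety worth addressing explicitly is that $F_b^{-1}\circ F_a$ is merely a bounded Borel function (since $F_b^{-1}$ typically has jumps), so I would invoke Borel rather than continuous functional calculus to define $\tilde b$ in part (1); on self-adjoint elements in a $\mathcal{C}^*$-probability space this is well defined and preserves the pushforward-of-spectral-measure identity used above.
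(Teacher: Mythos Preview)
Your proposal is correct and matches the paper's approach exactly: the paper does not actually write out a proof of this lemma, but simply states it as a consequence of ``the inverse method for random variables'' with a reference to Devroye's book, which is precisely the inverse transform sampling argument you spell out. Your observation about needing Borel rather than continuous functional calculus (since $F_b^{-1}$ may jump) is a valid caveat that the paper glosses over; in the paper's actual applications the issue is harmless because only the pushforward distribution of $\tilde b$ is used, and in the matrix case everything is finite-dimensional.
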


\subsection{Step 1: from the GUE to an intermediate model} 

\begin{Lem}[The strong asymptotic freeness of $M_N, \mathbf Y_N$] \label{Step1} ~
\\Define the random matrix 
	\begin{equation}
		M_N = V_N \ \textrm{diag }\Big( \frac 1 N  \etc   \frac {N-1}N ,  \frac N N  \Big) \ V_N^*,
	\end{equation}
where $V_N$ is a unitary Haar matrix, independent of $\mathbf Y_N$. Then, almost surely $(M_N, \mathbf Y_N)$ converges strongly in distribution to $(m, \mathbf y)$, where $\mathbf y$ is the strong limit of $\mathbf Y_N$, free from a self adjoint variable $m$ whose spectral distribution is the uniform measure on $[0,1]$.
\end{Lem}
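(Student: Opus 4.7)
The plan is to realize $M_N$ as a continuous functional calculus of an independent GUE matrix and then transfer the strong convergence guaranteed by Theorem \ref{thm:CM}; the inverse-transform coupling (Lemma \ref{Coupling}) is the pivot. Let $X_N$ be an $N\times N$ GUE matrix defined on the same probability space as (and independent of) $\mathbf Y_N$. Its eigenvectors form a Haar unitary $V_{X_N}$, independent of the eigenvalues of $X_N$, hence of $\mathbf Y_N$. Consequently the random matrix $\widetilde M_N := V_{X_N}\,\mathrm{diag}(1/N,\ldots,N/N)\,V_{X_N}^*$ has the same joint distribution with $\mathbf Y_N$ as the given $M_N$, and by Lemma \ref{Coupling}(2) one has $\widetilde M_N = F_{X_N}(X_N)$. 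Since strong convergence depends only on the joint law, it suffices to prove the lemma for $(\widetilde M_N,\mathbf Y_N)$. By Theorem \ref{thm:CM}, almost surely, $(X_N,\mathbf Y_N)$ converges strongly in distribution to $(x,\mathbf y)$ with $x$ semicircular free from $\mathbf y$.

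Next I would uniformly approximate $F_{X_N}$ by a fixed polynomial. By Proposition \ref{Prop:ConvEqui}, almost surely the spectrum of $X_N$ lies in $[-3,3]$ for $N$ large and $\mu_{X_N}$ converges weakly to the semicircle law $\sigma$. Since the cdf $F_x$ of $\sigma$ is continuous, Polya's theorem upgrades weak convergence to uniform convergence: $\|F_{X_N}-F_x\|_\infty \to 0$ almost surely. Given $\eps>0$, let $Q_\eps$ be a polynomial with $\sup_{[-3,3]}|F_x-Q_\eps|<\eps$ (Weierstrass). Then for $N$ large, almost surely,
\[
\bigl\|\widetilde M_N - Q_\eps(X_N)\bigr\| \leq \|F_{X_N}-F_x\|_\infty + \sup_{[-3,3]}|F_x-Q_\eps| < 2\eps.
\]
Fix a polynomial $P$ in $1+q$ non-commuting variables. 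The local Lipschitz property of $P$ in its first argument (the other arguments being uniformly bounded in norm by the strong convergence of $\mathbf Y_N$) gives $\bigl|\,\|P(\widetilde M_N,\mathbf Y_N)\| - \|P(Q_\eps(X_N),\mathbf Y_N)\|\,\bigr| = O(\eps)$.

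Finally, Theorem \ref{thm:CM} applied to the polynomial $P(Q_\eps(\,\cdot\,),\,\cdot\,)$ yields $\|P(Q_\eps(X_N),\mathbf Y_N)\| \to \|P(Q_\eps(x),\mathbf y)\|$ almost surely, and the same approximation at the limit shows $\bigl|\,\|P(Q_\eps(x),\mathbf y)\| - \|P(F_x(x),\mathbf y)\|\,\bigr| = O(\eps)$. Sending $\eps\to 0$ yields $\|P(\widetilde M_N,\mathbf Y_N)\| \to \|P(F_x(x),\mathbf y)\|$, and an analogous telescoping (or Voiculescu's asymptotic freeness theorem applied directly) delivers the corresponding trace convergence. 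The variable $F_x(x)$ lies in the $\mathcal C^*$-algebra generated by $x$, hence is free from $\mathbf y$, and its spectral distribution is the pushforward of $\sigma$ by $F_x$, namely the uniform law on $[0,1]$. Setting $m := F_x(x)$ then identifies the claimed limit. The delicate point, which is exactly what Theorem \ref{thm:CM} is tailored to supply, is the norm control: one must approximate the \emph{random} cdf $F_{X_N}$ by a single polynomial $Q_\eps$ in such a way that the polynomial functional calculus of $X_N$ inherits strong convergence; the remaining ingredients (weak convergence, Polya, Weierstrass, local Lipschitz bounds) are then soft.
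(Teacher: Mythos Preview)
Your proof is correct and follows essentially the same route as the paper's: couple $M_N$ with a GUE matrix via the inverse-transform sampling of Lemma~\ref{Coupling} so that $M_N = F_{X_N}(X_N)$ and $m = F_x(x)$, then use Theorem~\ref{thm:CM} together with the uniform convergence $F_{X_N}\to F_x$ (you cite Polya, the paper cites Dini via P\'olya--Szeg\H{o}; same fact) to transfer strong convergence. The only cosmetic difference is that you insert an explicit Weierstrass polynomial $Q_\eps$ and telescope through $Q_\eps(X_N)$, whereas the paper telescopes through $F_x(X_N)$ and invokes Proposition~\ref{Prop:ConvEqui} (which hides the Weierstrass step); and you build $M_N$ from the eigenvectors of $X_N$ rather than, as the paper does, building $X_N$ from the given $V_N$---either coupling is fine, though the paper's avoids the remark about joint laws.
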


\begin{proof} Let $X_N$ be a GUE matrix independent from $\mathbf Y_N$, such that $X_N = V_N \Delta_N V_N^*$, where $\Delta_N$ is a diagonal matrix, independent of $V_N$, with non decreasing entries along the diagonal (we recall a proof of that decomposition 
in Proposition \ref{prop:SpDec}, Section \ref{sec:SpDec}). Let $x$ be a semicircular variable free from the strong limit $\mathbf y$ of $\mathbf Y_N$. Let $F_{X_N}$ and $F_x$ be the cumulative functions of the spectral measures of $X_N$ and $x$ respectively. By the coupling of Lemma \ref{Coupling}, we get that $m = F_{x}(x)$ has the expected distribution and, since the eigenvalues of a GUE matrix are almost surely distinct, we get that almost surely $M_N = F_{X_N}(X_N)$. Then, for any polynomial $P$, almost surely 
	\begin{eqnarray} \label{eq:Step1}
		 \lefteqn{ \Big|  \|  P(m, \mathbf y)    \| - \| P(M_N, \mathbf Y_N) \| \Big|
		 	}\\
		& \leq & \Big| \| P( F_x(x), \mathbf y) \|  -  \| P( F_x(X_N), \mathbf Y_N)  \| \Big| \nonumber
		\\
		 &  & + \  \|  P( F_x(X_N), \mathbf Y_N) - P( F_{X_N}(X_N), \mathbf Y_N) \|. \nonumber
\end{eqnarray}

\noindent The first term in the right hand side of \eqref{eq:Step1} tends to zero almost surely by the strong convergence in distribution of $(X_N, \mathbf Y_N)$ to $(x, \mathbf y)$ (Theorem \ref{thm:CM}) and Proposition \ref{Prop:ConvEqui} since $F_{x}$ is continuous. For the second term, recall first that the convergence in distribution of $X_N$ to $x$ implies the pointwise convergence $F_{X_N}$ to $F_x$ (at any point of continuity of $F_x$, and so on $\mathbb R$). By Dini's theorem \cite[Problem 127 chapter II]{PS98}, $F_{X_N}$ convergence actually uniformly to $F_x$. Hence, since the matrices $X_N, \mathbf Y_N$ are uniformly bounded in operator norm and by local uniform continuity of $P$, the second term converges also to zero. 
\end{proof}

\subsection{Step 2: from the reference model to other unitary invariant models}

\begin{Lem}[The strong asymptotic freeness of $Z_N, \mathbf Y_N$] \label{Step2} ~
\\Consider an $N$ by $N$ random matrix $Z_N$ of the form
	\begin{equation}
		Z_N := \gamma_N(M_N) = V_N \ \textrm{diag }\bigg( \gamma_N\Big( \frac 1 N\Big)  \etc \gamma_N \Big(  \frac {N-1}N \Big), \gamma_N \Big( \frac N N  \Big) \bigg) \ V_N^*,
	\end{equation}

\noindent where $M_N$ is the random matrix of Lemma \ref{Step1} and $\gamma_N : [0,1] \to \mathbb C$ is a random map, independent of $\mathbf Y_N$. Assume that almost surely $\gamma_N$ converges uniformly to a continuous map $\gamma: [0,1] \to \mathbb C$, that is
			$$ \|\gamma - \gamma_N \|_\infty := \sup_{t\in [0,1]}  |\gamma(t) - \gamma_N(t) | \limN 0.$$ 

\noindent We set the self adjoint variable $z = \gamma(m)$ given by functional calculus, with $m$ as in Lemma \ref{Step1} (it is well defined since $\|m\|\leq 1$). Then, almost surely, $(Z_N,\mathbf Y_N)$ converges strongly to $(z, \mathbf y)$.
\end{Lem}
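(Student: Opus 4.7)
The plan is to deduce Lemma~\ref{Step2} from Lemma~\ref{Step1} by two successive reductions: first, I would transport the strong convergence of $(M_N, \mathbf{Y}_N)$ through the \emph{fixed} continuous function $\gamma$ via functional calculus to obtain the strong convergence of $(\gamma(M_N), \mathbf{Y}_N)$ to $(z, \mathbf{y})$; second, I would absorb the difference between $\gamma(M_N)$ and the true matrix $Z_N = \gamma_N(M_N)$ as an operator-norm perturbation, using the hypothesis $\|\gamma_N - \gamma\|_\infty \to 0$.

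For the first reduction, I would work on the almost sure event provided by Lemma~\ref{Step1}, on which $(M_N, \mathbf{Y}_N)$ converges strongly in distribution to $(m, \mathbf{y})$. Because $M_N$ and the entries of $\mathbf{Y}_N$ are self-adjoint, Proposition~\ref{Prop:ConvEqui}(2) applies: taking the continuous functions $\Re\gamma$ and $\Im\gamma$ on $M_N$ and the identity on $\mathbf{Y}_N$ yields that $((\Re\gamma)(M_N), (\Im\gamma)(M_N), \mathbf{Y}_N)$ converges strongly in distribution to $((\Re\gamma)(m), (\Im\gamma)(m), \mathbf{y})$. Reassembling $\gamma(M_N) = (\Re\gamma)(M_N) + i(\Im\gamma)(M_N)$ and $z = \gamma(m)$ delivers the strong convergence of $(\gamma(M_N), \mathbf{Y}_N)$ to $(z, \mathbf{y})$.

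For the second reduction, I would fix a polynomial $P$ and bound
\[
\bigl|\,\|P(Z_N, Z_N^*, \mathbf{Y}_N)\| - \|P(z, z^*, \mathbf{y})\|\,\bigr|
\]
by the sum of $\bigl\|P(Z_N, Z_N^*, \mathbf{Y}_N) - P(\gamma(M_N), \gamma(M_N)^*, \mathbf{Y}_N)\bigr\|$ and $\bigl|\,\|P(\gamma(M_N), \gamma(M_N)^*, \mathbf{Y}_N)\| - \|P(z, z^*, \mathbf{y})\|\,\bigr|$. The second summand vanishes by the first reduction. For the first summand, $\mathrm{Sp}(M_N) \subset [0,1]$ together with functional calculus gives $\|Z_N - \gamma(M_N)\| \leq \|\gamma_N - \gamma\|_\infty \to 0$ almost surely; combined with the uniform operator-norm bounds on $\mathbf{Y}_N$ (from the strong convergence of $\mathbf{Y}_N$) and on $Z_N, \gamma(M_N)$, the local Lipschitz property of $P$ on bounded sets forces the first summand to tend to zero as well. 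The almost-sure conclusion then holds on the intersection of two probability-one events (Lemma~\ref{Step1} and the uniform convergence of $\gamma_N$).

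The argument does not appear to contain a delicate step: it is essentially a mechanical combination of Lemma~\ref{Step1}, the functional-calculus stability packaged in Proposition~\ref{Prop:ConvEqui}, and the uniform approximation $\gamma_N \to \gamma$. If any point deserves attention, it is the observation that $\gamma(M_N)$ and $Z_N$ (which a priori need not be self-adjoint) can still be handled by applying Proposition~\ref{Prop:ConvEqui} separately to $\Re\gamma$ and $\Im\gamma$ before recombining.
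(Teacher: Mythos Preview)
Your proposal is correct and matches the paper's own proof essentially line by line: the same triangle-inequality split into a term handled by Lemma~\ref{Step1} combined with Proposition~\ref{Prop:ConvEqui} (functional calculus in the fixed continuous map $\gamma$), and a perturbative term controlled by $\|\gamma_N-\gamma\|_\infty\to 0$ together with uniform operator-norm bounds. The only cosmetic difference is that the paper works with the pair $(\gamma,\bar\gamma)$ whereas you use $(\Re\gamma,\Im\gamma)$; these are equivalent since $M_N$ is self-adjoint.
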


\begin{proof} For any polynomial $P$, one has
	\begin{eqnarray} \label{eq:Step2}
		 \lefteqn{ \Big|  \|  P(z, z^*, \mathbf y)    \| - \| P(Z_N, Z_N^* \mathbf Y_N) \| \Big|
		 	}\\
		& \leq & \Big| \big\| P\big( \gamma(m), \bar \gamma(m), \mathbf y\big) \big\|  -  \big \| P \big(\gamma(M_N), \bar \gamma(M_N), \mathbf Y_N \big)  \big\| \Big| \noindent
		\\
		 &  & + \  \big\|  P\big( \gamma(M_N), \bar \gamma(M_N), \mathbf Y_N\big) - P\big(\gamma_N(M_N), \bar \gamma_N(M_N), \mathbf Y_N\big) \big\|, \noindent
\end{eqnarray}

\noindent where $\bar \gamma$ denotes the complex conjugacy of $\gamma$. The first term of the right hand side of \eqref{eq:Step2} tends to zero by Lemma \ref{Step1}, Proposition \ref{Prop:ConvEqui}, and the continuity of $\gamma$. By the uniform convergence of $\gamma_N$, the continuity of polynomials, and the fact that the matrices $M_N, \mathbf Y_N$ are uniformly bounded in operator norm, the second term vanishes at infinity.
\end{proof}

\subsection{Step 3: application to Haar matrices}

\noindent Now, let $U_N$ be a unitary Haar matrix, independent of $\mathbf Y_N$. By the spectral theorem (see Proposition \ref{prop:SpDec} in Section \ref{sec:SpDec}), we can write $U_N = V_N \Delta_N V_N^*$, where the entries of $\Delta_N = \textrm{diag }(e^{2\pi i\theta_1} \etc e^{2\pi i\theta_N})$ have non decreasing argument in $[0,2\pi[$ along the diagonal. Define as above $M_N = V_N \ \textrm{diag }( \frac 1 N \etc \frac {N-1}N, \frac N N) \ V_N^*$, where $V_N$ is the unitary matrix in the decomposition of $U_N$. Denote by $F_{U_N}$ the cumulative function of $ \textrm{diag }( \theta_1\toN \etc \theta\toN_N)$. We get by the coupling of Lemma \ref{Coupling} that
	\begin{equation}
		U_N = \exp\big( 2 \pi i F_{U_N}^{-1} (M_N) \big).
	\end{equation}

\noindent By Lemma \ref{Step2}, to get the strong convergence of $(U_N, \mathbf Y_N)$ it remains to prove that almost surely $\gamma_N : t \to  \exp \big( 2 \pi i F_{U_N}^{-1} (t) \big)$ converges uniformly to $\gamma : t \to  \exp \big( 2 \pi i t \big)$.
\\
\\From the convergence of $U_N$ to a Haar unitary $u$, we get that almost surely $F_{U_N}$ converges to $F_u$. Let $t$ in $[0,1[$. Almost surely, for any $0<\alpha<1-t$, there exists $N_0 \geq 1$ such that for any $N\geq N_0$, $F_{U_N}(t+\alpha) \geq t + \frac \alpha 2$. The points $\big(F^{-1}_{U_N}(t), t\big)$ and $\big( t+ \alpha, F_{U_N}(t+\alpha) \big)$ belong to the graph of $F_{U_N}$, with vertical segments on points of discontinuity. Hence, since $F_{U_N}$ is non decreasing we get $F^{-1}_{U_N}(t) \leq t + \alpha$. Hence, for any $0\leq t< 1$, we get $\limsup_{N\rightarrow \infty} F^{-1}_{U_N}(t) \leq t$. With a symmetric reasoning we get $\liminf_{N\rightarrow \infty} F^{-1}_{U_N}(t) \geq t$. Now, remark that $F^{-1}_{U_N}(0)= \theta_1 \geq 0$ and $F^{-1}_{U_N}(1)= \theta_N \leq 1$. Hence, $F^{-1}_{U_N}$ converges pointwise to the identity map on $[0,1]$. By Dini's theorem, it converges uniformly. Hence $\gamma_N$ converges uniformly to $\gamma$.

\section{Proof of Theorem \ref{thm:COE}}\label{proofCOE}

\noindent The proof of \ref{thm:COE} is obtained by changing the words unitary, Hermitian and GUE into orthonormal, symmetric and GOE, respectively symplectic, self dual and GSE, and by taking $\mathbf Y_N$ a family of independent orthogonal, respectively symplectic, matrices. Instead of Theorem \ref{thm:CM} we use the main result of \cite{SCH}. In the symplectic case, we also have to consider matrices of even size.

\section{Proof of Corollary \ref{cor:SumProd}} \label{sec:ProofCorSumProd}

\noindent First recall the following consequence of \cite[Corollary 2.1]{MAL}.

\begin{Lem}\label{lem:DiagMatrices} Let $D_1\toN$ and $D_2\toN$ be two diagonal matrices having a strong limit in distribution separately. Then, there exists diagonal matrices $\tilde D_1\toN$ and $\tilde D_2\toN$, with the same eigenvalues as $D_1\toN$ and $D_2\toN$ respectively, such that $(\tilde D_1\toN, \tilde D_2\toN)$ converges strongly in distribution.
\end{Lem}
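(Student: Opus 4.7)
The plan is to adapt the inverse-transform-sampling coupling of Lemma \ref{Coupling} simultaneously to both diagonal matrices: each $D_i\toN$ is reordered (via conjugation by an appropriate permutation matrix) so that its diagonal entries are arranged in non-decreasing order. The resulting matrices $\tilde D_i\toN$ clearly have the same spectra as $D_i\toN$, and the joint limit is chosen in the commutative $\mathcal C^*$-probability space $\big( C([0,1]), \int_0^1 \cdot \, dt\big)$. Setting $m(t)=t$ (the uniform variable on $[0,1]$), we take
$$\tilde d_i = F_{d_i}^{-1}(m), \qquad i=1,2,$$
where $d_i$ is the strong separate limit of $D_i\toN$ and $F_{d_i}$ is the cumulative distribution function of its spectral measure $\mu_i$. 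By the inverse-transform method (cf.\ Lemma \ref{Coupling}(1)), $\tilde d_i$ has spectral distribution $\mu_i$.

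With this choice, the sorted $k$-th diagonal entry of $\tilde D_i\toN$ equals $F_{D_i\toN}^{-1}(k/N)$. By Proposition \ref{Prop:ConvEqui}, strong convergence of $D_i\toN$ to $d_i$ gives weak-$*$ convergence of empirical distributions (hence pointwise convergence of $F_{D_i\toN}$ to $F_{d_i}$ at continuity points of the latter) together with the Hausdorff convergence of the spectra. As in Step 3 of Section \ref{sec:proof}, an application of Dini's theorem to the non-decreasing sequence $F_{D_i\toN}^{-1}$ yields uniform convergence of $F_{D_i\toN}^{-1}$ to $F_{d_i}^{-1}$ on $[0,1]$, at least in the case where $\mu_i$ has no atoms (if it does, one works on the complement of arbitrarily small neighborhoods of the jumps of $F_{d_i}^{-1}$).

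Given this uniform convergence and local uniform continuity of $P$ on the compact sets that contain all spectra, the trace
$$\tau_N\big[P(\tilde D_1\toN, \tilde D_2\toN)\big] = \frac{1}{N}\sum_{k=1}^N P\big(F_{D_1\toN}^{-1}(k/N), F_{D_2\toN}^{-1}(k/N)\big)$$
is a Riemann sum converging to $\int_0^1 P\big(F_{d_1}^{-1}(t), F_{d_2}^{-1}(t)\big)\,dt = \tau\big[P(\tilde d_1, \tilde d_2)\big]$, while the norm $\max_k |P(\cdot,\cdot)|$ converges to $\sup_{t\in [0,1]} |P(\cdot,\cdot)|$, which is exactly $\|P(\tilde d_1, \tilde d_2)\|$. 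This verifies both parts of the definition of strong convergence in distribution.

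The main obstacle is the atomic case: when $\mu_i$ has atoms, $F_{d_i}^{-1}$ has jumps and the clean Dini argument of Section \ref{sec:proof} no longer applies on all of $[0,1]$. The fix is to exploit the Hausdorff convergence of the individual spectra (Proposition \ref{Prop:ConvEqui}(3)) directly: the sorted pairs $(F_{D_1\toN}^{-1}(k/N), F_{D_2\toN}^{-1}(k/N))_{k=1}^N$ Hausdorff-converge to $\{(F_{d_1}^{-1}(t), F_{d_2}^{-1}(t)) : t \in [0,1]\}$, and the Riemann sum still converges by a dominated convergence argument since the matrices remain uniformly bounded in operator norm. This yields the required strong joint convergence, and in particular gives us the coupling needed to feed into the Haar-invariance arguments of Corollary \ref{cor:SumProd}.
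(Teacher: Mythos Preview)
Your comonotone coupling (sort both diagonals and realise the limit as $\tilde d_i=F_{d_i}^{-1}(m)$ in $C([0,1])$) is a natural and self-contained substitute for what the paper actually does, namely invoke \cite[Corollary~2.1]{MAL} without further argument. When each $F_{d_i}^{-1}$ is continuous your scheme works exactly as written: Dini gives uniform convergence of the empirical quantile functions, and both the Riemann-sum trace and the sup-norm converge.

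There are, however, two problems with your treatment of the bad case. First, a terminological slip: atoms of $\mu_i$ produce \emph{plateaus} of $F_{d_i}^{-1}$, not jumps; jumps of $F_{d_i}^{-1}$ come from \emph{gaps} in the support of $\mu_i$. Second, and this is the real gap, the proposed Hausdorff-convergence fix is false. Take $D_1^{(N)}$ with $\lceil N/2\rceil$ diagonal entries equal to $0$ and the rest equal to $1$, and $D_2^{(N)}$ with $\lfloor N/2\rfloor$ entries equal to $0$ and the rest equal to $1$. Each converges strongly to $\tfrac12\delta_0+\tfrac12\delta_1$. After sorting, for every odd $N$ the middle position carries the pair $(0,1)$, so
\[
\big\|\tilde D_1^{(N)}-\tilde D_2^{(N)}\big\|=1,
\]
whereas your proposed limit has $\tilde d_1=\tilde d_2=F_d^{-1}(m)$ and hence $\|\tilde d_1-\tilde d_2\|=0$. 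The sorted pairs therefore do \emph{not} Hausdorff-converge to $\{(F_{d_1}^{-1}(t),F_{d_2}^{-1}(t)):t\in[0,1]\}=\{(0,0),(1,1)\}$, and strong convergence to your declared limit fails. The same obstruction arises whenever the limiting supports have gaps and the two empirical quantile functions jump at indices that differ by $O(1)$.

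The upshot is that the monotone rearrangement is simply the wrong coupling in the disconnected-support case: the $o(N)$ mismatched positions are invisible to the trace but lethal for the norm. To rescue the argument you must either target a joint limit whose support already contains all pairs that can arise from such mismatches (e.g.\ the product measure $\mu_1\otimes\mu_2$, realised by a suitable deterministic interleaving of one of the sorted sequences), or fall back on the external result \cite[Corollary~2.1]{MAL} as the paper does.
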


\noindent Let $A_N$ and $B_N$ be as in Corollary \ref{cor:SumProd}. Without loss of generality, we can assume that the laws of $A_N$ and $B_N$ are invariant under unitary conjugacy. Let $\Delta_{A_N}$ and $\Delta_{B_N}$ be diagonal matrices of eigenvalues of $A_N$ and $B_N$ respectively. By Proposition \ref{Prop:ConvEqui}, the assumptions on $A_N$ and $B_N$ mean their strong convergence in distribution separately, and so the strong convergence of $\Delta_{A_N}$ and $\Delta_{B_N}$ separately. With the notations of Lemma \ref{lem:DiagMatrices}, consider $\tilde \Delta_{A_N}$ and $\tilde \Delta_{B_N}$. Let $(U_N, V_N)$ be independent unitary Haar matrices, independent of $(\tilde \Delta_{A_N}, \tilde \Delta_{B_N})$. Then $(A_N, B_N)$ and $(U_N \tilde \Delta_{A_N} U_N^*, V_N \tilde \Delta_{B_N} V_N^*)$ are pairs of random matrices with the same probability law (see Proposition \ref{prop:SpDec}). By Theorem \ref{MainTh}, we get the almost sure strong convergence of $(U_N,V_N, \tilde \Delta_{A_N} , \tilde \Delta_{B_N})$, and then of $(U_N \tilde \Delta_{A_N} U_N^*, V_N \tilde \Delta_{B_N} V_N^*)$. Hence, we obtain that $(A_N, B_N)$ has a strong limit in distribution $(a,b)$. The spectral distribution of $a$ is $\mu$, the one of $b$ is $\nu$, and $a$ and $b$ are free. The strong convergence implies the convergence of the spectrum of $A_N + B_N$ to the support of $\mu \boxplus \nu$ (which is the spectral distribution of $a+b$) by Proposition \ref{Prop:ConvEqui}. We then get the first point of Corollary \ref{cor:SumProd}.
\\
\\We get the second point of Corollary \ref{cor:SumProd} with the same reasoning on 
$( \Delta_{A_N}, \Delta_{B_N}^{1/2})$. The application stated after Corollary \ref{cor:SumProd} 
follows by taking $\Pi_N=B_N$, which satisfies the assumptions since $t\in (0,1)$, and remarking 
that $\Pi_N^{1/2}=  \Pi_N$.

\section{Appendix: The spectral theorem for unitary invariant random matrices}\label{sec:SpDec}

\noindent This result seems to be folklore in the literature of Random Matrix Theory, but we are
not able to find an exact reference, so we include a proof for the convenience of the readers. 

\begin{Prop}[The spectral theorem for unitary invariant random matrices]\label{prop:SpDec}
Let $M_N$ be an $N \times N$ Hermitian or unitary random matrix whose distribution is invariant under conjugacy by unitary matrices. Then, $M_{N}$ can be written $M_N = V_N \Delta_N V_N^*$ almost surely, where
\begin{itemize}

\item $V_N$ is distributed according to the Haar measure on the unitary group,

\item $\Delta_N$ is the diagonal matrix of the eigenvalues of $M_N$, arranged in increasing order if $M_N$ Hermitian, and in increasing order with respect to the set of arguments in $[-\pi,\pi[$ if $M_N$ is unitary,
\item $V_N$ and $\Delta_N$ are independent.
\end{itemize}
\end{Prop}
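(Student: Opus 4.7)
The plan is to build $V_N$ explicitly as the product of a measurably chosen spectral diagonalizer of $M_N$ and an independent Haar-distributed element of the maximal torus, and then to verify the Haar property of $V_N$ by exploiting the unitary invariance hypothesis. First, on the Borel set $\mathcal S_N$ of matrices with simple spectrum --- which carries full probability when $M_N$ is a GUE matrix or a Haar unitary, the two ensembles in which Proposition \ref{prop:SpDec} is actually used in this paper --- I would produce a Borel map $V_0 : \mathcal S_N \to U(N)$ such that $M = V_0(M)\,\Delta(M)\,V_0(M)^*$, with $\Delta(M)$ the diagonal of eigenvalues of $M$ in the prescribed order. A concrete construction is to order the unit eigenvectors by the sorted eigenvalues and normalize each by requiring its first nonzero entry to be a positive real; alternatively one invokes the Kuratowski--Ryll-Nardzewski measurable selection theorem applied to the multi-map $M \mapsto \{V \in U(N) : M = V\Delta(M)V^*\}$.

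Enlarge the probability space to carry a random variable $T_N$, independent of $M_N$ and Haar distributed on the diagonal torus $T^N \subset U(N)$, and set $V_N := V_0(M_N)\,T_N$ and $\Delta_N := \Delta(M_N)$. Because $T_N$ commutes with the diagonal matrix $\Delta_N$, the identity $V_N \Delta_N V_N^* = M_N$ holds by construction. The heart of the proof is to check that $V_N$ is Haar on $U(N)$ and independent of $\Delta_N$. Fix $U \in U(N)$. Unitary invariance of $M_N$ together with the independence of $T_N$ gives $(U M_N U^*, T_N) \stackrel{d}{=} (M_N, T_N)$. Pushing forward by the measurable map $(M,t) \mapsto (V_0(M)t,\, \Delta(M))$ and using $\Delta(U M_N U^*) = \Delta_N$ produces
\[
\bigl(V_0(U M_N U^*)\,T_N,\ \Delta_N\bigr) \stackrel{d}{=} (V_N, \Delta_N).
\]
Now both $V_0(U M_N U^*)$ and $U V_0(M_N)$ diagonalize $U M_N U^*$ to the same $\Delta_N$, so they differ by a measurable $S(M_N) \in T^N$, the centralizer of $\Delta_N$ in $U(N)$ in the simple-spectrum case. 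Conditionally on $M_N$, left-invariance of the Haar measure on $T^N$ yields $S(M_N)\,T_N \stackrel{d}{=} T_N$, so the left-hand side above has the same joint law as $(U V_N, \Delta_N)$. Combining the two identities yields $(U V_N, \Delta_N) \stackrel{d}{=} (V_N, \Delta_N)$ for every fixed $U \in U(N)$.

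Taking a countable dense subset of $U(N)$ and using continuity, the regular conditional law of $V_N$ given $\Delta_N$ is then invariant under left translation by all of $U(N)$, hence equal to the Haar measure by uniqueness; in particular it is independent of $\Delta_N$, which gives the assertions on the distribution of $V_N$ and its independence from the diagonal factor.

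The main obstacle is the measure-theoretic construction of $V_0$ and the treatment of the measure-zero locus $\mathcal S_N^{\,c}$ where eigenvalues coalesce. In the two concrete applications of the proposition in the present paper (namely to $X_N$ a GUE matrix in the proof of Lemma \ref{Step1} and to $U_N$ a Haar unitary in the proof of Theorem \ref{MainTh}), the bad stratum has probability zero, so the construction above is enough and the normalization of eigenvectors is straightforward. To cover a fully general unitarily invariant law, one replaces the fixed torus $T^N$ by the stabilizer $\mathrm{Stab}(\Delta_N) \subset U(N)$, which depends measurably on $\Delta_N$, and samples $T_N$ uniformly in this compact group; the argument then carries through verbatim.
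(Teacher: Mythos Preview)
Your proof is correct and follows essentially the same route as the paper: one constructs $V_N$ as a measurable diagonalizer of $M_N$ post-multiplied by an independent Haar element of the stabilizer of $\Delta_N$ (the torus $T^N$ in the simple-spectrum case, the block product $\prod_k \mathcal U(N_k)$ in general), and then uses the unitary invariance of $M_N$ to deduce left-invariance of the law of $V_N$. The paper handles arbitrary multiplicities from the outset by conditioning on the multiplicity pattern and derives independence via an auxiliary Haar variable $W_N$ rather than through regular conditional laws, but these are only presentational differences.
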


\noindent We actually use the proposition only for unitary Haar and GUE matrices, 
which are two cases where almost surely the eigenvalues are distinct. 
The fact that multiplicities of eigenvalues is almost surely one brings slight conceptual simplifications in 
the proof, but nevertheless do not change the result. Hence, we choose to state the proposition without any restriction on the multiplicity of the matrices.

\begin{proof}
By reasoning conditionally, one can always assume that the multiplicities of the eigenvalues of $M_N$ 
are 
almost surely constant. We denote by $(N_1 \etc N_K)$ the sequence of multiplicities when the eigenvalues 
are considered in the natural order in $\mathbb R$ or in increasing order with respect to their argument in $[-\pi,\pi[$.
\\
\\Since 
$M_N$ is normal, it can be written $M_N = \tilde V_N  \Delta_N  \tilde V_N$, where 
$\tilde V_N$ is a random unitary matrix and $\Delta_N$ is as announced. The choice of $\tilde V_N$ can 
be made in a measurable way, see for instance \cite[Section 5.3]{DEI}, with minor modifications.
\\
\\Let $(u_1\etc u_K)$ be a family of independent random matrices, independent of $(\Delta_N,\tilde V_N)$ and 
such that for any $k=1\etc K$, the matrix $u_k$ is distributed according to the Haar measure on 
$\mathcal U(N_k)$, the group of $N_k \times N_k$ unitary matrices. We set 
		$$V_N=\tilde V_N \ \mathrm{diag} \ (u_1\etc u_K),$$
and claim that the law of $V_N$ depends only on the law of $M_N$, not in the choice of the random matrix 
$\tilde V_N$. Indeed, let $M_N = \bar V_N \Delta_N \bar V_N$ be an other decomposition, where 
$\bar V_N$ is a unitary random matrix, independent of $(u_1\etc u_K)$. The multiplicities of the 
eigenvalues being $N_1 \etc N_K$, there exists $(v_1\etc v_K)$ in 
$\mathcal U(N_1)\times \dots \times \mathcal U(N_K)$, independent of $(u_1\etc u_K)$, such that 
$\bar V_N = \tilde V_N \ \mathrm{diag} \ (v_1\etc v_K)$. Hence, we get 
$\bar V_N \ \mathrm{diag} \ (u_1\etc u_K) = \tilde V_N \ \mathrm{diag} \ (v_1u_1\etc v_Ku_K)$, which is equal in law to $V_N$. This proves the claim.
\\
\\Let $W_N$ be an $N \times N$ unitary matrix. Then 
$W_N M_N W_N^* = (W_N \tilde V_N) \Delta_N (W_N \tilde V_N)^*$. 
By the above, since $M_N$ and $W_N M_N W_N^* $ are equal in law, 
then $V_N$ and $W_N V_N$ are also equal in law. Hence $V_N$ is Haar distributed in $\mathcal U(N)$.
\\
\\It remains to show the independence between $V_N$ and $\Delta_N$. Let $f : \mathcal U(N) \to \mathbb C$ 
and $g : \MN \to \mathbb C$ two bounded measurable functions such that $g$ depends only on the eigenvalues 
of its entries. Then one as $\esp \big[ f(V_N) g(\Delta_N) \big]  =  \esp \big[ f(V_N) g(M_N) \big]$. Let $W_N$ 
be Haar distributed in $\mathcal U(N)$, independent of $(V_N,\Delta_N)$. Then by the invariance under 
unitary conjugacy of the law of $M_N$, one has
\begin{eqnarray*}
	\esp \big[ f(V_N) g(\Delta_N) \big] & = &  \esp \big[ f(W_N V_N) g(W_N M_N W_N^*) \big] \\
		& = &  \esp \big[ f(W_N V_N) g(\Delta_N) \big]\\
		& = &  \esp \Big[ \esp \big [ f(W_N V_N) \big | V_N,\Delta_N \big ] g(\Delta_N) \big]\\
		& = &  \esp \big[ f(W_N) \big] \esp \big[ g(\Delta_N) \big] =\esp \big[ f(V_N) \big] \esp \big[ g(\Delta_N) \big].
\end{eqnarray*}\end{proof}

\section{Acknowledgments}

\noindent B.C. would like to thank Gilles Pisier for asking him Question \eqref{eq:QuestionPisier} 
many years ago, as it was a source of 
inspiration for this paper.
C.M. would like thank his Ph.D. supervisor Alice Guionnet for submitting him as a part of his Ph.D. project 
a question that lead to the main theorem of this paper and for her insightful guidance. 
He would also like to thank Ofer Zeitouni and Mikael de la Salle for useful discussions during the preparation of this article.

This paper was initiated and mostly completed during the 
Erwin Schr\"odinger International Institute for Mathematical Physics
workshop 
``Bialgebras in Free Probability'' in April 2011.
Both authors gratefully acknowledge the ESI and the organizers of the meeting
for providing an inspiring working environment.

The authors would like to thank the anonymous referee(s) for their constructive suggestions of improvements of
the manuscript.

Both authors acknowledge the financial support of the ESI. 
The research of B.C. was supported in part by a
Discovery grant from the Natural Science and Engineering Research Council of Canada, an Early Research Award of the Government of Ontario,
and the ANR GranMa.

\bibliographystyle{plain} 
\bibliography{biblio}
\end{document}